\documentclass[a4paper,11pt]{amsart}
\usepackage[
  top=2cm,
  bottom=2cm,
  left=2.5cm,
  right=2.5cm,
  headheight=14pt,
  headsep=1cm,
  footskip=1cm
]{geometry}
\usepackage{amsmath, epsfig, verbatim}
\usepackage{amsfonts}
\usepackage{amsthm}
\usepackage{mathtools}
\usepackage{amsmath}
\usepackage{amssymb} 
\usepackage{graphics,graphicx}
\usepackage{epstopdf}
\usepackage{listings}
\usepackage{xcolor}
\usepackage{tikz}
\usepackage{subcaption}
\usepackage{pgfplots}
\usepackage{caption}
\usepackage{booktabs}
\usepackage{tabularx}
\usepackage{hyperref}   
\usepackage{doi}
\usepackage{enumitem}
\usepackage[numbers,sort&compress]{natbib}
\usepackage[linesnumbered,ruled,vlined]{algorithm2e}
\usetikzlibrary{calc}
\allowdisplaybreaks
\definecolor{codegray}{rgb}{0.95,0.95,0.95}
\definecolor{pykeyword}{rgb}{0.13,0.13,1}
\definecolor{pystring}{rgb}{0.58,0,0.82}

\lstdefinestyle{pythonstyle}{
    backgroundcolor=\color{codegray},
    language=Python,
    basicstyle=\ttfamily\small,
    keywordstyle=\color{pykeyword}\bfseries,
    stringstyle=\color{pystring},
    commentstyle=\color{gray},
    showstringspaces=false,
    numbers=left,
    numberstyle=\tiny,
    frame=single,
    breaklines=true,
    tabsize=4,
}

\numberwithin{equation}{section}
\theoremstyle{plain}
\newtheorem{theorem}{Theorem}
\newtheorem{defn}[theorem]{Definition}
\newtheorem{coro}[theorem]{Corollary}
\newtheorem{lemma}[theorem]{Lemma}
\newtheorem{prop}[theorem]{Proposition}
\newtheorem{remark}[theorem]{Remark}

\begin{document}
\title{ $k$-Mutual Visibility in Graphs} 
\author{Tonny K B}
\address{Tonny K B, Department of Mathematics, College of Engineering Trivandrum, Thiruvananthapuram, Kerala, India, 695016.}
\email{tonnykbd@cet.ac.in}
\author{Shikhi M}
\address{Shikhi M, Department of Mathematics, College of Engineering Trivandrum, Thiruvananthapuram, Kerala, India, 695016.}
\email{shikhim@cet.ac.in}
\begin{abstract}
In this paper, we introduce the notion of $k$-mutual visibility, a relaxation of classical mutual visibility in which every pair of selected vertices is joined by a shortest path containing at most $k$ internal vertices of the selected set. This parameterized concept naturally generalizes classical mutual visibility and provides a graded notion of obstruction tolerance. We define the $k$-mutual visibility number $\mu_k(G)$ and establish its fundamental properties. We derive bounds on $\mu_k(G)$ in terms of graph parameters such as diameter and girth, and determine its exact value for several fundamental graph classes. We further investigate $k$-mutual visibility in convex subgraphs and characterize it in block graphs by introducing the notion of $k$-admissible sets in the associated block-cutpoint tree. We present a polynomial-time algorithm, \textsc{kMV}, that recognizes whether a given subset $S\subseteq V(G)$ is a $k$-mutual visibility set of $G$. We also formulate the \textsc{$k$-Mutual Visibility} decision problem and prove that it is NP-complete. Finally, we define the $k$-mutual visibility covering number $\tau_k(G)$ and establish several of its fundamental properties.
\end{abstract}
\subjclass[2020]{05C12, 05C69, 05C85, 68Q17}
\keywords{mutual-visibility set, $k$-mutual visible set, $k$-mutual visibility number, $k$-mutual visibility covering number}
\maketitle
\section{Introduction}
Let $G=(V,E)$ be a simple graph and let $X \subseteq V$. 
Two vertices $u,v \in V$ are said to be $X$-visible \cite{MV_3} if there exists a shortest $(u, v)$-path $P$ such that none of the internal vertices of $P$ belong to $X$; equivalently,
$V(P)\cap X \subseteq \{u,v\}$.
A subset $X \subseteq V$ is called a mutual-visibility set \cite{Stefano} of $G$ if every pair of distinct vertices in $X$ is $X$-visible. 
The maximum cardinality of a mutual-visibility set in $G$ is called the mutual-visibility number of $G$ and is denoted by $\mu(G)$. 
Moreover, a vertex $u \in V(G)\setminus X$ is said to be $X$-visible if it is $X$-visible with respect to every vertex $v \in X$. This graph-theoretic concept has attracted considerable attention in the literature ~\cite{MV_1, MV_2, MV_3, MV_4, MV_5, MV_6, MV_7, MV_8, MV_9, TMV_1, TMV_2, VP_1}. 
In addition, a number of extensions of mutual visibility, including dual mutual visibility, 
outer mutual visibility, and total mutual visibility, have been introduced and studied 
in ~\cite{MV_10}.

The notion of mutual visibility in graphs captures the idea that pairs of vertices can ``see'' each other along shortest paths that avoid all other vertices of the chosen set. 
While this concept is natural and well motivated, it is also rather restrictive, as the presence of a single additional vertex on every shortest path between two vertices completely destroys their visibility.
In many graph classes, this strict blocking condition significantly limits the size of mutual visibility sets.

To overcome this limitation, we introduce the concept of $k$-mutual visibility, which allows a controlled level of tolerance.
Instead of requiring shortest paths to avoid the entire set, we permit up to $k$ internal vertices of the set to lie on a shortest $(u,v)$-path.
Thus, the parameter $k$ measures the tolerance of the set with respect to internal obstructions along geodesics.
When $k=0$, the definition coincides with classical mutual visibility, while larger values of $k$ provide a gradual relaxation of this condition.

This tolerance-based framework offers several advantages.
First, it yields a hierarchy of parameters that interpolates between strict visibility and unrestricted shortest-path connectivity.
Second, it allows for larger and more stable visibility sets in graphs where geodesics are highly constrained.
Finally, the parameter $k$ provides additional flexibility for structural and extremal investigations, enabling finer distinctions between graph classes.
In particular, in $k$-mutual visibility the parameter $k$ allows a trade-off between the size of a mutually visible set and the strictness of visibility along shortest paths.

In this paper, we define the $k$-mutual visibility number $\mu_k(G)$ of a graph $G$ and investigate 
its fundamental properties, including monotonicity with respect to $k$ and stabilization for 
sufficiently large values of $k$. 
We establish general bounds on $\mu_k(G)$ in terms of standard graph parameters 
such as diameter, maximum degree, and girth. In particular, we revisit several results from Di Stefano \cite{Stefano} and reinterpret them 
within the framework of $k$-mutual visibility, obtaining corresponding extensions under the new 
definition. We analyze $(X,k)$-visibility in convex graphs and obtain exact values of $\mu_k(G)$ 
for some fundamental graph families, including paths, cycles, and complete bipartite graphs.
For block graphs, we exploit their structural representation via the block--cutpoint tree and 
introduce a suitable notion of $k$-admissibility that captures $k$-mutual visibility through 
this tree structure. 
This approach enables a characterization of $k$-mutual visibility in block graphs and facilitates 
the determination of $\mu_k(G)$ for this class. In addition, we present a polynomial-time algorithm, \textsc{kMV}, that determines whether a given subset $S\subseteq V(G)$ is a $k$-mutual visibility set of $G$ in time $O\bigl(|S|(|V(G)|+|E(G)|)+|S|^2\bigr)$. We also study the associated decision problem and prove that the \textsc{$k$-Mutual Visibility} problem is NP-complete.
 
\section{Notations and preliminaries}
Let $G=(V, E)$ be a finite, simple, undirected graph with vertex set $V(G)$ and edge set $E(G)$. 
Unless stated otherwise, all graphs considered in this paper are assumed to be connected, so that 
a path exists between every pair of vertices. The maximum degree of a graph $G$ is denoted by $\Delta(G)$.

For a vertex $u \in V(G)$, the \emph{open neighborhood} of $u$ is denoted by $N(u)$ and is defined as 
$N(u)=\{v \in V(G) : uv \in E(G)\}$. 
The \emph{closed neighborhood} of $u$ is denoted by $N[u]$ and is given by 
$N[u]=N(u)\cup\{u\}$.
The \emph{complete graph} on $n$ vertices, denoted by $K_n$, is the graph in which every pair of distinct 
vertices is joined by an edge, and hence contains $\binom{n}{2}$ edges. 
A sequence of vertices $(u_0,u_1,\ldots,u_n)$ is called a \emph{$(u_0,u_n)$-path} in $G$ if 
$u_i u_{i+1}\in E(G)$ for all $i=0,1,\ldots,n-1$. 
A \emph{cycle} in a graph $G$ is obtained by adding the edge $u_0u_n$ to a $(u_0,u_n)$-path. 
If $G$ itself is a path on $n$ vertices, it is denoted by $P_n$, while a cycle on $n$ vertices is 
denoted by $C_n$. The length of a cycle is equal to the number of edges (or, equivalently, vertices) 
in that cycle. The \emph{girth} of a graph $G$, denoted by $g$, is the length of a shortest cycle in $G$. A path $P=(x_0, x_1, \cdots, x_\ell)$ in a graph $G$ is called \emph{isometric} if 
$d_G(x_i,x_j)=|i-j|$ for all $0\le i<j\le \ell$.

A shortest path between two vertices of a graph is called a geodesic. 
The distance between vertices $u$ and $v$ in $G$, denoted by $d_G(u,v)$, is the length of a 
geodesic joining them. 
The diameter of a graph $G$, written as ${\rm diam}(G)$, is the maximum distance between any pair 
of vertices of $G$.
Let $G$ be a graph. A subgraph $H$ of $G$ is said to be \emph{convex} if, for every pair of vertices $u,v\in V(H)$, every shortest $(u,v)$-path in $G$ is contained in $H$. Moreover, if $H$ is convex, then $d_H(u,v)=d_G(u,v)$ for all $u,v\in V(H)$. For a vertex set $S\subseteq V(G)$, the \emph{convex hull} of $S$, denoted by
$\mathrm{hull}_G(S)$, is the smallest convex subgraph of $G$ containing $S$.
Equivalently, $\mathrm{hull}_G(S)$ is obtained by iteratively adding to $S$
all vertices that lie on a shortest path in $G$ between two vertices already in the set. When the ambient graph $G$ is clear from the context, we write $\mathrm{hull}(S)$.

A set $S\subseteq V(G)$ is said to be in \emph{general position} if no vertex of $S$
lies on a shortest path between two other vertices of $S$. Equivalently, $S$ is in general position if for every three distinct vertices $u,v,w\in S$, the vertex $v$ does not belong to any shortest $(u,w)$-path in $G$. The \emph{general position number} of $G$, denoted by $gp(G)$, is the maximum
cardinality of a general position set in $G$. 

A \emph{block} of $G$ is a maximal $2$-connected subgraph of $G$.
We denote by $\mathcal{B}$ the set of all blocks of $G$.
The \emph{block-cutpoint graph} of $G$ is the bipartite graph $H$ whose vertex set is partitioned
into two parts $A$ and $\mathcal{B}'$, where $A$ consists of the articulation vertices of $G$
and $\mathcal{B}'$ consists of a vertex $b$ for each block $B \in \mathcal{B}$.
For $b \in \mathcal{B}'$ an edge $vb$ is included in $H$ if and only if 
$v \in A \cap V(B)$, where $B$ is the block corresponding to the vertex $b$. It is well known that if $G$ is connected, then its block-cutpoint graph is a tree.  A graph $G$ is called a \emph{block graph} if every block of $G$ is a clique.

Let $G$ and $H$ be two graphs. The \emph{corona} of $G$ and $H$, denoted by $G \odot H$, is the graph obtained by taking one copy of $G$ and $|V(G)|$ copies of $H$, and for each vertex $v$ in $G$, joining $v$ to each vertex in the corresponding copy of $H$ associated with  $v$.
\section{ $k$-Mutual Visibility in a Graph}
\begin{defn}
Let $G$ be a graph and let $X \subseteq V(G)$.
Two vertices $u,v \in V(G)$ are said to be $(X,k)$-visible if there exists a shortest $(u, v)$-path whose number of internal vertices lying in $X$ is at most $k$. A set $X \subseteq V(G)$ is called a $k$-mutual visible set if every pair of distinct vertices in $X$ is $(X,k)$-visible. The $k$-mutual visibility number of a graph $G$, denoted by $\mu_k(G)$, is the maximum cardinality of a subset $X \subseteq V(G)$ that is $k$-mutual visible.  The $k$-visibility polynomial, $\mathcal{V}_k(G)$, of  $G$  is defined as
$\mathcal{V}_k(G)=\sum_{i\geq 0} r_{k,i} x^{i}$,  where $r_{k,i}$ denote  the number of $k$-mutual visibility sets in $G$ of cardinality $i$.
\end{defn}
Note that $\mu_0(G)$ coincides with the mutual-visibility number of $G$.
Moreover, the parameter $\mu_k(G)$ is non-decreasing in $k$.
Indeed, let $0 \le k \le \ell$ and let $X$ be a  $k$-mutual visible set in a graph $G$ of order $n$.
For any two distinct vertices $u,v \in X$, there exists a shortest $(u,v)$-path whose number
of internal vertices belonging to $X$ is at most $k$, and therefore at most $\ell$.
Hence, $X$ is also a $\ell$-mutual visible set, implying that $\mu_k(G) \le \mu_\ell(G)$.
In particular, $\mu_0(G) \le \mu_1(G) \le \mu_2(G) \le \cdots \le  |V(G)|$. Furthermore, if $G$ has order $n$, then $\mu_k(G)=\mu_{n-2}(G)$ for all $k \geq n-2$.

The concepts of total mutual visibility, outer mutual visibility, and dual mutual visibility were introduced by S. Cicerone et al in \cite{MV_10}. We extend these notions by defining the corresponding $k$-visibility variants, namely total $k$-mutual visibility, outer $k$-mutual visibility, and dual $k$-mutual visibility.
Let $G=(V,E)$ be a graph, let $X\subseteq V$, and let $k\ge 0$.
Write $\overline{X}=V\setminus X$.

\begin{itemize}
\item The set $X$ is called a \emph{total $k$-mutual visibility set} if every pair of distinct vertices $u,v\in V$ are $(X,k)$-visible.

\item The set $X$ is called an \emph{outer $k$-mutual visibility set} if every pair of distinct vertices $u,v\in X$ are $(X,k)$-visible and, moreover, every pair $u\in X$ and $v\in \overline{X}$ are $(X,k)$-visible.

\item The set $X$ is called a \emph{dual $k$-mutual visibility set} if every pair of distinct vertices $u,v\in X$ are $(X,k)$-visible and every pair of distinct vertices $u,v\in \overline{X}$ are $(X,k)$-visible.
\end{itemize}

When $k=0$, the above notions coincide with the total mutual-visibility set, outer mutual-visibility set, and dual mutual-visibility set introduced in~\cite{MV_10}. 

For a graph $G$ and an integer $k\ge 0$, we write 
$\mu_k^{\mathrm{tot}}(G)$, $\mu_k^{\mathrm{out}}(G)$, and 
$\mu_k^{\mathrm{dual}}(G)$ for the maximum cardinality of a total $k$-mutual visibility set, an outer $k$-mutual visibility set, and a dual $k$-mutual visibility set in $G$, respectively.
\begin{lemma}\label{P9.lem3}
Let $G$ be a graph, let $k\ge 0$, and let $H$ be a convex subgraph of $G$.
If $S\subseteq V(H)$ is a $k$-mutual visible set in $G$, then $S$ is a $k$-mutual visible set in $H$.
\end{lemma}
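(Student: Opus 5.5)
The plan is to unwind the definitions directly and transfer the witnessing geodesic from $G$ to $H$ using convexity. Fix two distinct vertices $u,v\in S$. Since $S$ is a $k$-mutual visible set in $G$, the definition gives a shortest $(u,v)$-path $P$ in $G$ whose number of internal vertices lying in $S$ is at most $k$. The goal is to show that this same $P$ witnesses $(S,k)$-visibility of $u$ and $v$ inside $H$.

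The first step uses convexity. Because $H$ is convex and $u,v\in V(H)$, every shortest $(u,v)$-path in $G$ is contained in $H$. In particular, $P$ is a $(u,v)$-path in $H$.

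The second step checks that $P$ is still a shortest path in $H$. We recorded in the preliminaries that convexity gives $d_H(u,v)=d_G(u,v)$. Since $P$ has length $d_G(u,v)$, it has length $d_H(u,v)$, so $P$ is a geodesic of $H$. (Even without this fact, $d_H\ge d_G$ holds for any subgraph, so a $(u,v)$-path in $H$ of length $d_G(u,v)$ is automatically shortest in $H$.)

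The third step is bookkeeping. The internal vertices of $P$, and which of them lie in $S$, do not depend on the ambient graph. So $P$ has at most $k$ internal vertices in $S$, and $u,v$ are $(S,k)$-visible in $H$. Since $u,v$ were arbitrary, $S$ is a $k$-mutual visible set in $H$.

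There is no real obstacle here. The only point needing care is the second step: we must confirm that a geodesic of $G$ lying in $H$ is a geodesic of $H$, which follows from the distance equality for convex subgraphs.
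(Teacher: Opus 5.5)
Your proposal is correct and follows essentially the same argument as the paper: take the witnessing geodesic $P$ in $G$, use convexity to place it inside $H$, and note that the count of internal vertices in $S$ is unchanged. You also justify explicitly that $P$ is a geodesic of $H$ (via $d_H(u,v)\ge d_G(u,v)$), which the paper merely asserts.
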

\begin{proof}
Let $u,v\in S$ be distinct.
Since $S$ is a  $k$-mutual visible set in $G$, there exists a shortest $(u,v)$-path $P$ in $G$ such that
the number of internal vertices of $P$ belonging to $S$ is at most $k$.
As $H$ is a convex subgraph of $G$ and $u,v\in V(H)$, every shortest $(u,v)$-path in $G$ is entirely contained in $H$.
In particular, the path $P$ is a shortest $(u,v)$-path in $H$.
Therefore, the number of internal vertices of $P$ belonging to $S$ is at most $k$ when considered in $H$ as well.
Hence, $S$ is a  $k$-mutual visible set in $H$.
\end{proof}
\begin{prop}\label{P9.prop4}
Let $G$ be a graph, let $k\ge 0$, and let $H$ be a convex subgraph of $G$.
Then $\mu_k(H)\le \mu_k(G)$.
\end{prop}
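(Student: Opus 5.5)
The plan is to take a maximum $k$-mutual visible set $S$ of $H$, so that $|S|=\mu_k(H)$, and to show that the same set $S$, viewed as a subset of $V(G)$, is a $k$-mutual visible set of $G$. Then $\mu_k(G)\ge |S|=\mu_k(H)$, as required. Note that Lemma~\ref{P9.lem3} goes in the opposite direction (from $G$ to $H$), so it cannot be quoted directly. Instead I will reuse its key observation about convexity in the converse direction.

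The central step is to show that, for $u,v\in V(H)$, the shortest $(u,v)$-paths in $H$ are exactly the shortest $(u,v)$-paths in $G$. Convexity gives one inclusion immediately: every shortest $(u,v)$-path of $G$ lies in $H$. Since it is a path in $H$ of length $d_G(u,v)=d_H(u,v)$, it is a shortest path in $H$. For the other inclusion, let $P$ be a shortest $(u,v)$-path in $H$. Then $P$ is a path in $G$ of length $d_H(u,v)=d_G(u,v)$, using the equality of distances noted in the preliminaries for convex subgraphs. Hence $P$ is a shortest $(u,v)$-path in $G$. This is the only point needing care; it rests on the fact that $d_H=d_G$ on $V(H)$.

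With this in hand, let $u,v\in S$ be distinct. Because $S$ is $k$-mutual visible in $H$, there is a shortest $(u,v)$-path $P$ in $H$ with at most $k$ internal vertices in $S$. By the previous step, $P$ is also a shortest $(u,v)$-path in $G$. The number of its internal vertices lying in $S$ does not depend on the ambient graph, so $u$ and $v$ are $(S,k)$-visible in $G$. Hence $S$ is a $k$-mutual visible set of $G$, which gives $\mu_k(H)=|S|\le\mu_k(G)$.

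I do not expect a real obstacle here. The only subtlety is to justify that a geodesic of $H$ remains a geodesic of $G$, which is exactly where convexity (through $d_H=d_G$) is used.
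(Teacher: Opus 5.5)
Your proposal is correct and follows the paper's proof: both show that a geodesic of $H$ witnessing $(S,k)$-visibility is also a geodesic of $G$, so $S$ remains $k$-mutual visible in $G$. Your explicit use of $d_H(u,v)=d_G(u,v)$ is the right justification for that step; the paper instead cites the containment of $G$-geodesics in $H$, which on its own is the converse direction.
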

\begin{proof}
Let $S\subseteq V(H)$ be a  $k$-mutual visible set in $H$.
Since $H$ is a convex subgraph of $G$, every shortest path in $G$ between two vertices of $H$ is entirely contained in $H$.
Therefore, for any two distinct vertices $u,v\in S$, a shortest $(u,v)$-path witnessing the $k$-mutual visibility of $S$ in $H$
is also a shortest $(u,v)$-path in $G$.
Hence, the number of internal vertices of this path belonging to $S$ is at most $k$ in $G$ as well.
Thus, $S$ is a  $k$-mutual visible set in $G$, and the inequality $\mu_k(H)\le \mu_k(G)$ follows.
\end{proof}
\begin{prop}\label{P9.prop5}
Let $G=(V,E)$ be a graph, let $k\ge 0$, and let $V_1,\ldots,V_t\subseteq V$ satisfy
$\bigcup_{i=1}^t V_i=V$.
For each $i\in\{1,\ldots,t\}$, let $H_i=\mathrm{hull}(V_i)$.
Then $\mu_k(G)\le \sum_{i=1}^t \mu_k(H_i)$.
\end{prop}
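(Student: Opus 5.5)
Take a maximum $k$-mutual visible set $S$ of $G$, so $|S|=\mu_k(G)$, and split it along the covering $V_1,\ldots,V_t$. For each $i$ set $S_i=S\cap V_i$. Since $\bigcup_{i=1}^t V_i=V$, every vertex of $S$ lies in some $V_i$, so $S=\bigcup_{i=1}^t S_i$ and
\[
\mu_k(G)=|S|\le \sum_{i=1}^t |S_i|.
\]
It therefore suffices to show $|S_i|\le \mu_k(H_i)$ for each $i$, which holds once we know $S_i$ is a $k$-mutual visible set in $H_i$.

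The key observation is that $k$-mutual visibility is hereditary under taking subsets. Let $S'\subseteq S$ and let $u,v\in S'$ be distinct. Because $S$ is $k$-mutual visible, there is a shortest $(u,v)$-path $P$ with $|(V(P)\setminus\{u,v\})\cap S|\le k$. Since $S'\subseteq S$, we also have $|(V(P)\setminus\{u,v\})\cap S'|\le k$. Hence $S'$ is a $k$-mutual visible set in $G$. Applying this with $S'=S_i$ shows that $S_i$ is $k$-mutual visible in $G$.

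Next, $H_i=\mathrm{hull}(V_i)$ is by definition a convex subgraph of $G$, and $S_i\subseteq V_i\subseteq V(H_i)$. Lemma~\ref{P9.lem3} then says $S_i$ is a $k$-mutual visible set in $H_i$, so $|S_i|\le\mu_k(H_i)$. Summing over $i$ gives $\mu_k(G)\le\sum_{i=1}^t\mu_k(H_i)$.

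The only point that needs care is the hereditary step. Lemma~\ref{P9.lem3} applies to sets that are $k$-mutual visible in $G$, and $S_i$ is only a subset of such a set, so heredity must be checked first. The check is immediate because the witness path for $S$ serves for $S_i$ unchanged. Convexity of the hull is what lets us transfer to $H_i$: it guarantees that this shortest path in $G$ lies inside $H_i$ and is still a shortest path there.
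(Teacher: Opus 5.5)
Your proof is correct and follows essentially the same route as the paper. You restrict a maximum $k$-mutual visible set to the pieces, note that subsets of a $k$-mutual visible set remain $k$-mutual visible, pass to $H_i$ via Lemma~\ref{P9.lem3}, and sum. The only difference is cosmetic: you intersect $S$ with $V_i$ where the paper intersects with $V(H_i)$, which is immaterial since both families cover $S$ and lie inside the convex sets $V(H_i)$.
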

\begin{proof}
Let $S\subseteq V(G)$ be a maximum  $k$-mutual visible set in $G$, so $|S|=\mu_k(G)$.
For each $i\in\{1,\ldots,t\}$, set $S_i =S\cap V(H_i)$.
Since $S_i\subseteq S$, the set $S_i$ is a  $k$-mutual visible set in $G$.
As $H_i$ is a convex subgraph of $G$, Lemma~\ref{P9.lem3} implies that
$S_i$ is a  $k$-mutual visible set in $H_i$, and hence $|S_i|\le \mu_k(H_i)$.
Since $S\subseteq \bigcup_{i=1}^t V(H_i)$, we have
$|S|=\bigl|\bigcup_{i=1}^t S_i\bigr|\le \sum_{i=1}^t |S_i|\le \sum_{i=1}^t \mu_k(H_i)$.
Thus $\mu_k(G)\le \sum_{i=1}^t \mu_k(H_i)$.
\end{proof}

\begin{prop}\label{P9.prop6}
Let $G$ be a graph and let $k\ge 0$.
Then $\mu_k(G)\ge gp(G)$ and $\mu_k(G)\ge \Delta(G)$.
Moreover, if $k\ge 1$, then $\mu_k(G)\ge \Delta(G)+1$.
\end{prop}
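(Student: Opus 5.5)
The plan is to reduce everything to the case $k=0$ via the monotonicity $\mu_0(G)\le\mu_k(G)$ observed right after the definition, and then to exhibit explicit sets. For the first inequality, take a general position set $S$ with $|S|=gp(G)$. For distinct $u,v\in S$, fix any geodesic $P$ between them. If some internal vertex $w$ of $P$ lay in $S$, then $u,w,v$ would be three distinct vertices of $S$ with $w$ on a shortest $(u,v)$-path, which general position forbids. Hence $P$ has no internal vertex in $S$, so $S$ is a mutual-visibility set and $gp(G)\le\mu_0(G)\le\mu_k(G)$.

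For $\mu_k(G)\ge\Delta(G)$, take a vertex $x$ with $\deg(x)=\Delta(G)$ and set $S=N(x)$. Let $u,v\in S$ be distinct. If $uv\in E(G)$, the geodesic is the edge $uv$, which has no internal vertices. Otherwise $d_G(u,v)=2$ and $(u,x,v)$ is a geodesic whose only internal vertex is $x\notin S$. So $N(x)$ is a mutual-visibility set, giving $\Delta(G)\le\mu_0(G)\le\mu_k(G)$.

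For $k\ge1$, use $S=N[x]$ instead. Pairs inside $N(x)$ are handled exactly as above. A pair $x,u$ with $u\in N(x)$ is adjacent, so again there are no internal vertices. For nonadjacent $u,v\in N(x)$, the geodesic $(u,x,v)$ now has exactly one internal vertex in $S$, namely $x$, and $1\le k$ is allowed. Hence $N[x]$ is a $k$-mutual visible set of size $\Delta(G)+1$.

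No step is a real obstacle. The only care needed is the degenerate edge cases: when $\Delta(G)=0$ the graph is $K_1$ and the bounds are trivial, and the distance between two neighbours of $x$ is at most $2$, so the path through $x$ is indeed a geodesic whenever they are nonadjacent.
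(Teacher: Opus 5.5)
Your proposal is correct and follows essentially the same route as the paper: a maximum general position set, $N(x)$ for a vertex $x$ of maximum degree, and $N[x]$ when $k\ge 1$, with the same geodesic $(u,x,v)$ argument. The only cosmetic difference is that you verify mutual-visibility ($k=0$) and then invoke monotonicity $\mu_0(G)\le\mu_k(G)$. The paper instead checks directly that each witness path has $0\le k$ internal vertices in the set.
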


\begin{proof}
Let $S$ be a general position set of maximum cardinality.
By definition, for any two distinct vertices $u,v\in S$, no vertex of $S\setminus\{u,v\}$ lies on a shortest $(u,v)$-path.
Hence every such path has at most $0\le k$ internal vertices belonging to $S$, and therefore $S$ is a  $k$-mutual visible set.
This implies $\mu_k(G)\ge gp(G)$.

Next, let $v$ be a vertex of degree $\Delta(G)$.
For any two distinct vertices $x,y\in N(v)$, either $xy\in E(G)$ or $xy\notin E(G)$, in which case $(x,v,y)$ is a shortest $(x,y)$-path.
In both cases there exists a shortest $(x,y)$-path with no internal vertices in $N(v)$.
Thus $N(v)$ is a  $k$-mutual visible set, and hence $\mu_k(G)\ge |N(v)|=\Delta(G)$.

Finally, assume $k\ge 1$ and consider the closed neighborhood $N[v]$.
Let $x,y\in N[v]$ be distinct.
If $x$ and $y$ are adjacent, then they are trivially $k$-visible; otherwise, $(x,v,y)$ is a shortest $(x,y)$-path with exactly one internal vertex from $N[v]$, namely $v$.
It follows that $N[v]$ is a  $k$-mutual visible set for $k\ge 1$, and therefore $\mu_k(G)\ge |N[v]|=\Delta(G)+1$.
\end{proof}
\begin{theorem}\label{p9.th2}
Let $G$ be a graph of order $n$ and diameter $d$, and let $k\ge 0$. Then $\mu_k(G)\le n-d+k+1$.
\end{theorem}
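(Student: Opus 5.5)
The plan is to fix a diametral pair and show that every $k$-mutual visible set must omit at least $d-k-1$ vertices lying on some geodesic between them. Let $S$ be a $k$-mutual visible set of $G$. Choose $u,v\in V(G)$ with $d_G(u,v)=d$, and fix a shortest $(u,v)$-path $P$, which has $d+1$ vertices. If $d\le k+1$, the bound $n-d+k+1\ge n$ is trivial, so I assume $d\ge k+2$. It suffices to produce a shortest $(u,v)$-path $P'$ with $|V(P')\cap S|\le k+2$. Such a $P'$ has $d+1$ vertices, so at least $d-k-1$ of them lie outside $S$, which gives $|S|\le n-(d-k-1)=n-d+k+1$.

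To construct $P'$, consider $S\cap V(P)$. If $|S\cap V(P)|\le k+2$, take $P'=P$. Otherwise, let $a$ and $b$ be the first and last vertices of $S$ along $P$ when traversed from $u$ to $v$. Then $a\neq b$, both lie in $S$, and no vertex of $S$ lies on $P$ before $a$ or after $b$. Since $S$ is $k$-mutual visible, there is a shortest $(a,b)$-path $Q$ with at most $k$ internal vertices in $S$. Define $P'$ by replacing the $(a,b)$-segment of $P$ with $Q$.

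The key step is checking that $P'$ is a shortest $(u,v)$-path. Subpaths of geodesics are geodesics, so the $(a,b)$-segment of $P$ has length $d_G(a,b)=|E(Q)|$. Hence $P'$ is a $(u,v)$-walk of length $d=d_G(u,v)$. A walk of minimum length cannot repeat a vertex, since otherwise it could be shortened. So $P'$ is a path, and therefore a geodesic.

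Finally, I count $S$ on $P'$. The parts of $P'$ before $a$ and after $b$ are unchanged from $P$, so they contain no vertex of $S$. The segment $Q$ contributes $a$, $b$, and at most $k$ internal vertices of $S$. Thus $|V(P')\cap S|\le k+2$, which completes the argument. The only delicate point I expect is the geodesic check in the previous paragraph, namely confirming that the concatenation is a genuine shortest path; the length count handles it.
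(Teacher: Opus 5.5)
Your proof is correct and follows essentially the same route as the paper: fix a diametral geodesic, take the first and last vertices of $S$ on it, and use a shortest path between them with at most $k$ internal vertices of $S$ to find at least $d-k-1$ vertices outside $S$. Your splicing step (the spliced walk has length $d$, hence is a path with $d+1$ distinct vertices) also makes explicit that the vertices of $P$ outside the segment from $a$ to $b$ are distinct from the internal vertices of $Q$; the paper's count $(d-j+i)+(j-i-1-k)$ relies on this without saying so.
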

\begin{proof}
Let $S\subseteq V(G)$ be a maximum  $k$-mutual visible set, so that $|S|=\mu_k(G)$.
Fix a diametral path $P=(x_0,x_1,\ldots,x_d)$ in $G$, where $d_G(x_0,x_d)=d$.
For all $0\le p<q\le d$ we have $d_G(x_p,x_q)=q-p$, since otherwise replacing the subpath
$x_p,\ldots,x_q$ by a shorter $(x_p,x_q)$-path would yield an $(x_0,x_d)$-path of length less than $d$,
contradicting the choice of $P$.

Let $ I=\{\,r\in\{0,\ldots,d\} : x_r\in S\,\}$.
If $|I|=0$, then all $d+1$ vertices of $P$ lie outside $S$, and hence
$n-|S|\ge d+1\ge d-1-k$.
If $|I|=1$, then at least $d$ vertices of $P$ lie outside $S$, implying
$n-|S|\ge d\ge d-1-k$.

Assume now that $|I|\ge 2$, and let $i=\min I$ and $j=\max I$.
By definition of $i$ and $j$, none of the vertices
$x_0,\ldots,x_{i-1},x_{j+1},\ldots,x_d$ belongs to $S$.
Thus at least $d-j+i$ vertices of $P$ lie in $V(G)\setminus S$. Since $x_i,x_j\in S$ and $S$ is  $k$-mutual visible, there exists a shortest $(x_i,x_j)$-path $Q$
whose number of internal vertices belonging to $S$ is at most $k$.
As $d_G(x_i,x_j)=j-i$, the path $Q$ has exactly $j-i-1$ internal vertices.
If $j-i\ge k+1$, then at least $j-i-1-k$ internal vertices of $Q$ lie in $V(G)\setminus S$.
Together with the $d-j+i$ vertices of $P$ outside $S$, this yields $
n-|S|\ge (d-j+i)+(j-i-1-k)=d-1-k$.
If $j-i<k+1$, then the vertices of $P$ outside $S$ already give $
n-|S|\ge d-j+i\ge d-(k+1)=d-1-k$.

In all cases, $n-|S|\ge d-1-k$, and hence $|S|\le n-d+k+1$.
Since $|S|=\mu_k(G)$, the result follows.
\end{proof}
\begin{theorem}\label{P9.th3}
Let $G$ be a graph with diameter $d$.
Then $\mu_k(G)=|V(G)|$ if and only if $k\ge d-1$.
\end{theorem}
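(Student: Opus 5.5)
We prove the two implications separately; both are short and rest on the fact that a geodesic of length $\ell$ has exactly $\ell-1$ internal vertices.

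\emph{Sufficiency.} Assume $k\ge d-1$ and take $X=V(G)$. Let $u,v$ be distinct vertices. Any shortest $(u,v)$-path has length $d_G(u,v)\le d$, so it has $d_G(u,v)-1\le d-1\le k$ internal vertices. Even if all of them lie in $X$, the pair is $(X,k)$-visible. Hence $V(G)$ is a $k$-mutual visible set and $\mu_k(G)=|V(G)|$.

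\emph{Necessity.} Assume $\mu_k(G)=|V(G)|=n$. Then the only possible maximum set is $X=V(G)$ itself, so $V(G)$ is $k$-mutual visible. Choose $x_0,x_d$ with $d_G(x_0,x_d)=d$, which we may take with $d\ge 2$ (if $d\le 1$ then $k\ge 0\ge d-1$ trivially). Every shortest $(x_0,x_d)$-path has exactly $d-1$ internal vertices, and all of them lie in $X=V(G)$. Visibility of this pair therefore forces $d-1\le k$.

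An alternative route for necessity is to invoke Theorem~\ref{p9.th2}: $n=\mu_k(G)\le n-d+k+1$ gives $k\ge d-1$ directly.

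There is no real obstacle here. The only care needed is the degenerate cases: $d=0$ (a single vertex) and $d=1$ (complete graphs), where the statement holds for every $k\ge 0$, since $k\ge d-1$ is automatic.
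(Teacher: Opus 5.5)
Your proof is correct and follows the paper's argument: for sufficiency, every geodesic has at most $d-1\le k$ internal vertices, and for necessity, a diametral pair has geodesics with exactly $d-1$ internal vertices, all of which lie in $V(G)$. Your alternative necessity argument via Theorem~\ref{p9.th2} is also valid, and your handling of the cases $d\le 1$ is fine, though the paper does not treat them separately.
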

\begin{proof}
Suppose that $k\ge d-1$.
Then every shortest path in $G$ has at most $d-1\le k$ internal vertices.
Hence $V(G)$ itself is a  $k$-mutual visible set, and therefore $\mu_k(G)=|V(G)|$.

Conversely, assume that $\mu_k(G)=|V(G)|$.
Let $u,v$ be a pair of vertices with $d_G(u,v)=d$.
Every shortest $(u,v)$-path has exactly $d-1$ internal vertices, all of which belong to $V(G)$.
Since $V(G)$ is  $k$-mutual visible, it follows that $d-1\le k$.
\end{proof}
\begin{theorem}\label{P9.th4}
Let $G$ be a graph of order $n$ and girth $g$. Then, for every integer $k\ge 0$, $\mu_k(G)\le n-g+2k+3$.
\end{theorem}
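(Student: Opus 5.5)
Fix a shortest cycle $C=(c_0,c_1,\ldots,c_{g-1})$ of $G$ and let $S$ be a maximum $k$-mutual visible set. The aim is to show $|V(C)\setminus S|\ge g-2k-3$, since then $n-|S|\ge g-2k-3$ and the bound follows. If $g$ is infinite ($G$ a tree) or $g\le 2k+3$ there is nothing to prove, so we assume $G$ has a cycle and $g\ge 2k+4$. We will use the standard fact that a shortest cycle is isometric in the weak sense needed here: for $c_p,c_q$ on $C$ with cyclic distance $\ell\le \lfloor g/2\rfloor$, we have $d_G(c_p,c_q)=\ell$. Moreover, when $\ell<g/2$ the shorter arc of $C$ is the \emph{unique} shortest $(c_p,c_q)$-path in $G$. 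Otherwise a second geodesic together with the arc would contain a cycle of length at most $2\ell<g$.

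The key step is to bound $|S\cap V(C)|$ by $2k+3$. Suppose, for contradiction, that $|S\cap V(C)|\ge 2k+4$. Going around $C$, choose two vertices $u,v\in S\cap V(C)$ such that the shorter arc between them has length below $g/2$ and contains at least $k+1$ vertices of $S$ in its interior. To find them, list the $S$-vertices on $C$ cyclically as $s_0,\ldots,s_{m-1}$ with $m\ge 2k+4$, and consider the pairs $(s_i,s_{i+k+2})$. Each such pair has $k+1$ $S$-vertices strictly between them along one arc. The arcs $s_i\to s_{i+k+2}$ for $i=0,\ldots,m-1$ cover every edge of $C$ exactly $k+2$ times, so their lengths sum to $(k+2)g$. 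Hence some arc has length at most $(k+2)g/m\le (k+2)g/(2k+4)=g/2$.

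By uniqueness of geodesics along arcs shorter than $g/2$, the only shortest $(u,v)$-path then passes through at least $k+1$ internal vertices of $S$, contradicting $k$-mutual visibility. Thus $|S\cap V(C)|\le 2k+3$, so $|V(C)\setminus S|\ge g-2k-3$, which gives $\mu_k(G)\le n-g+2k+3$.

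The main obstacle is the boundary case where the chosen arc has length exactly $g/2$ ($g$ even). Then there can be two geodesics, the two arcs of $C$, and possibly others. To handle it, note that equality in the averaging forces every arc $s_i\to s_{i+k+2}$ to have length exactly $g/2$. In that case the complementary arc from $s_{i+k+2}$ back to $s_i$ also has length $g/2$ and contains $m-k-3\ge k+1$ interior $S$-vertices. Any other geodesic of length $g/2$ between them would form with $C$ a closed walk containing a cycle shorter than $g$ unless it coincides with one of the two arcs. This last point needs care; if it fails, use the slack instead: $m\ge 2k+4$ and the strictness when $m>2k+4$, with the extremal configuration $m=2k+4$ treated directly.
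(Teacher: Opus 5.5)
Your averaging argument is sound for $m\ge 2k+5$, but the extremal case $m=2k+4$ is a genuine gap. The intermediate claim that a \emph{fixed} shortest cycle $C$ satisfies $|S\cap V(C)|\le 2k+3$ is false. Take $G=K_{3,3}$ with parts $\{a_1,a_2,a_3\}$ and $\{b_1,b_2,b_3\}$, $k=0$ (so $g=4=2k+4$), and $S=\{a_1,a_2,b_1,b_2\}$. Then $S$ is a maximum mutual-visibility set: $a_1,a_2$ see each other through $b_3$, and $b_1,b_2$ see each other through $a_3$. Yet the shortest cycle $C=(a_1,b_1,a_2,b_2)$ contains all four vertices of $S$.

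Your proposed rescue of the boundary case also fails. A geodesic $Q$ of length $g/2$ that differs from both arcs is internally disjoint from each arc; otherwise two distinct subgeodesics of length less than $g/2$ would close a cycle shorter than $g$. So $Q$ together with an arc is a cycle of length exactly $g$, and no contradiction arises. As written, your argument only gives $|S\cap V(C)|\le 2k+4$, i.e.\ $\mu_k(G)\le n-g+2k+4$.

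The missing idea, which is what the paper uses, is that the cycle on which you count non-$S$ vertices must be allowed to depend on $S$. In the extremal configuration ($m=2k+4$ and every pair $s_i,s_{i+k+2}$ antipodal), set $u=s_0$ and $v=s_{k+2}$. Let $A$ be the arc of $C$ from $s_0$ to $s_{k+2}$; it has exactly $k+1$ interior $S$-vertices. By $k$-mutual visibility there is a geodesic $Q$ with at most $k$ interior $S$-vertices, and $Q$ differs from both arcs. By the internal-disjointness remark above, $C'=Q\cup A$ is a shortest cycle with $|S\cap V(C')|\le 2+(k+1)+k=2k+3$. Counting non-$S$ vertices on $C'$ instead of $C$ then gives the bound. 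Equivalently, choose $C$ at the outset to minimize $|S\cap V(C)|$ among all shortest cycles.

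The paper performs this replacement repeatedly for the pair $x_{i_1},x_{i_{k+3}}$, whose two arcs both carry at least $k+1$ interior $S$-vertices, lowering the count by at least one each time. Your averaging shows that a single swap suffices, so once the swap is added your route gives a cleaner version of the same argument.
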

\begin{proof}
Suppose that $
\mu_k(G) > n-g+(2k+4)$. Let $S$ be a maximum  $k$-mutual visible set in $G$, and let $
C=(x_0,x_1,\ldots,x_{g-1})$
be a cycle of length $g$. Since $|S|>n-g+2k+3$, at most $g-(2k+4)$ vertices of $C$ lie outside $S$.
Hence $C$ contains at least $2k+4$ vertices of $S$. 
Choose vertices $
x_{i_1},x_{i_2},\ldots,x_{i_{2k+4}}\in S \cap V(C)$
with $ i_1<i_2<\cdots<i_{2k+4}$. Consider the vertices $x_{i_1}$ and $x_{i_{k+3}}$.
Along the cycle $C$, both $(x_{i_1},x_{i_{k+3}})$-paths contain at least
$k+1$ internal vertices belonging to $S$.

Since $S$ is a  $k$-mutual visible set, there exists a shortest
$(x_{i_1},x_{i_{k+3}})$-path $Q$ in $G$ such that at most $k$ internal vertices
of $Q$ belong to $S$. Moreover, the length of $Q$ is at most the distance between $x_{i_1}$ and
$x_{i_{k+3}}$ along the cycle $C$. Consequently, the union of $Q$ with the shorter of the two
$(x_{i_1},x_{i_{k+3}})$-paths on $C$ contains a cycle of length at most $g$.
By the definition of girth, this cycle must have length exactly $g$. Thus, the resulting cycle of length $g$ contains at least one fewer vertex of $S$ than $C$, since the removed segment of $C$ contains at least $k+1$ vertices of $S$, whereas $Q$ contributes at most $k$ internal vertices of $S$.
Repeating this replacement process finitely many times, we obtain a cycle of length
$g$ containing at most $2k+3$ vertices of $S$.
Therefore the result follows.
\end{proof}
\begin{remark}
Let $G$ be a graph of order $n$, diameter $d$, and girth $g$, and let $k\ge 0$.
By Theorems~\ref{p9.th2} and~\ref{P9.th4}, we have
$\mu_k(G)\leq \min\{ n-d+k+1,  n-g+2k+3, n \}$.
\end{remark}
\begin{theorem}\label{P9.th6}
Let $G$ be a graph of order $n$, and let $k\ge 0$.
Suppose that $G$ contains an isometric path $P=(x_0, x_1, \ldots, x_\ell)$.
Then $\mu_k(G)\le n-\ell+k+1$.
\end{theorem}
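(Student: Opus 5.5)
The plan is to repeat the argument of Theorem~\ref{p9.th2}, now applied to the given isometric path $P=(x_0,\ldots,x_\ell)$ in place of a diametral path. The only feature of the diametral path used there was the identity $d_G(x_p,x_q)=q-p$ for all $0\le p<q$. For an isometric path this identity holds by definition, so the counting should carry over unchanged.

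Let $S$ be a maximum $k$-mutual visible set and put $I=\{r : x_r\in S\}$. The goal is to show $n-|S|\ge \ell-1-k$, which I will do by cases on $I$.

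\begin{enumerate}
\item If $|I|\le 1$, then at least $\ell$ vertices of $P$ lie outside $S$. Since $\ell\ge \ell-1-k$, this case is done.
\item Otherwise set $i=\min I$ and $j=\max I$. The $\ell-j+i$ vertices $x_0,\ldots,x_{i-1},x_{j+1},\ldots,x_\ell$ all lie outside $S$.
\begin{enumerate}
\item If $j-i\le k$, these vertices already give $n-|S|\ge \ell-(j-i)\ge \ell-k$.
\item If $j-i\ge k+1$, use the $k$-mutual visibility of $x_i,x_j$. It gives a shortest $(x_i,x_j)$-path $Q$ with at most $k$ internal vertices in $S$. Since $P$ is isometric, $d_G(x_i,x_j)=j-i$, so $Q$ has exactly $j-i-1$ internal vertices. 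Hence at least $j-i-1-k$ of them lie outside $S$.
\end{enumerate}
\end{enumerate}

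The one point needing care is in case (b): the vertices outside $S$ counted on $Q$ must be distinct from the $\ell-j+i$ outside vertices of $P$ counted above. I expect this to be the main obstacle. A vertex $x_r$ with $r<i$ cannot lie on $Q$: every vertex $w$ of a shortest $(x_i,x_j)$-path satisfies $d(x_i,w)+d(w,x_j)=j-i$. But $d(x_i,x_r)+d(x_r,x_j)=(i-r)+(j-r)>j-i$, and the case $r>j$ is symmetric. So the two sets are disjoint, which is again only a consequence of isometry. Adding the two counts gives $n-|S|\ge (\ell-j+i)+(j-i-1-k)=\ell-1-k$.

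Combining the cases yields $n-|S|\ge \ell-1-k$ in every case. Therefore $\mu_k(G)=|S|\le n-\ell+k+1$. Theorem~\ref{p9.th2} is then recovered as the special case where $P$ is a diametral path, so that $\ell=d$.
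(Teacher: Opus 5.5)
Your proof is correct and takes essentially the same approach as the paper. In both, you count the $\ell-(j-i)$ vertices of $P$ with index below $i$ or above $j$ together with at least $j-i-1-k$ non-$S$ internal vertices of a witnessing shortest $(x_i,x_j)$-path. Your isometry argument that no $x_r$ with $r<i$ or $r>j$ lies on such a path makes explicit a disjointness step the paper uses without comment.
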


\begin{proof}
Let $S\subseteq V(G)$ be a  $k$-mutual visible set with $|S|=\mu_k(G)$ and let
$I=\{\, i\in\{0,1,\dots,\ell\}: x_i\in S \,\}$.
If $|I|\le 1$, then at least $\ell$ vertices of $P$ lie in $V(G)\setminus S$, so
$|S|\le n-\ell\le n-\ell+k+1$.
Assume $|I|\ge 2$, and let $i=\min I$, $j=\max I$.
Then $x_0,\dots,x_{i-1},x_{j+1},\dots,x_\ell\in V(G)\setminus S$, so at least $\ell-(j-i)$ vertices of $P$ lie outside $S$.
Since $S$ is  $k$-mutual visible, there exists a shortest $(x_i,x_j)$-path $Q$ containing at most $k$ internal vertices from $S$.
Because $P$ is isometric, $d_G(x_i,x_j)=j-i$, hence every shortest $(x_i,x_j)$-path has exactly $j-i-1$ internal vertices, and therefore at least $j-i-1-k$ of them belong to $V(G)\setminus S$.
Thus $|V(G)\setminus S|\ge (\ell-(j-i))+(j-i-1-k)=\ell-1-k$, and consequently
$|S|=n-|V(G)\setminus S|\le n-(\ell-1-k)=n-\ell+k+1$.
\end{proof}
\section{$k$-mutual visibility for some graph classes}
\begin{prop}\label{P9.prop1}
Let $P_n$ be the path on $n\ge 1$ vertices and let $k\ge 0$.
Then $ \mu_k(P_n)=min \{n, k+2\}$.
\end{prop}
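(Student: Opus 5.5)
The plan is to prove matching upper and lower bounds, splitting according to whether $n\le k+2$ or $n>k+2$.

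\emph{Upper bound.} The path $P_n$ has diameter $d=n-1$, so Theorem~\ref{p9.th2} gives
\[
\mu_k(P_n)\le n-(n-1)+k+1=k+2 .
\]
Trivially $\mu_k(P_n)\le n$, so $\mu_k(P_n)\le\min\{n,k+2\}$. For $n=1$ the diameter is $0$ and the bound $k+2\ge 1=n$ is harmless, so the argument still applies. I could instead invoke Theorem~\ref{P9.th6} with the isometric path $P_n$ itself, taking $\ell=n-1$; this gives the same bound.

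\emph{Lower bound.} If $n\le k+2$, then $k\ge n-2=d-1$, and Theorem~\ref{P9.th3} gives $\mu_k(P_n)=n$ directly.

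If $n>k+2$, write $P_n=(v_1,\dots,v_n)$ and take $S=\{v_1,\dots,v_{k+2}\}$, the first $k+2$ consecutive vertices. For $v_i,v_j\in S$ with $i<j$, the unique shortest path is $(v_i,\dots,v_j)$. Its internal vertices are the $j-i-1\le k$ vertices $v_{i+1},\dots,v_{j-1}$. Hence $S$ is $k$-mutual visible and $\mu_k(P_n)\ge k+2$.

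Combining the two bounds yields $\mu_k(P_n)=\min\{n,k+2\}$. No step is a real obstacle. The only care needed is in the degenerate cases $n=1,2$, where the diameter formula and Theorem~\ref{P9.th3} must be read correctly: for $n=1$ the condition $k\ge d-1=-1$ holds vacuously.
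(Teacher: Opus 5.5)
Your proof is correct, but it is organized differently from the paper's. The paper argues from scratch. It lists a set $X$ in path order as $x_1,\dots,x_m$ and observes that the unique $(x_1,x_m)$-geodesic carries all other $m-2$ vertices of $X$ as internal vertices, so $k$-mutual visibility forces $m\le k+2$. Conversely, in any set of at most $k+2$ vertices, every pair has at most $k$ selected internal vertices on its geodesic.

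You instead obtain the upper bound by specializing the diameter bound of Theorem~\ref{p9.th2} (or Theorem~\ref{P9.th6}) with $d=n-1$, and you handle the case $n\le k+2$ with Theorem~\ref{P9.th3}. This is legitimate, because those results are proved independently of the proposition. The paper's later remark that Theorem~\ref{p9.th2} is tight then rests only on your lower-bound construction, so nothing becomes circular. For $P_n$ the proof of Theorem~\ref{p9.th2} essentially collapses to the paper's direct count anyway, since the diametral path is the whole graph.

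What your route buys is brevity: it shows directly that $P_n$ attains equality in the general diameter bound. What the paper's direct argument buys, at no extra cost, is a slightly stronger statement. A subset of $V(P_n)$ is $k$-mutual visible if and only if it has at most $k+2$ vertices, not merely when it is the first $k+2$ consecutive ones. This determines every coefficient of the $k$-visibility polynomial, giving $\mathcal{V}_k(P_n)=\sum_{i=0}^{\min\{n,k+2\}}\binom{n}{i}x^i$. Your check for the consecutive block works verbatim for any $k+2$ vertices, so one added sentence would recover this characterization within your approach as well.
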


\begin{proof}
Let $X\subseteq V(P_n)$ and list the vertices of $X$ in their natural order along the path as $x_1,\ldots,x_m$, where $m=|X|$. Since $P_n$ is a path, for any $1\le i<j\le m$ the unique shortest $(x_i,x_j)$-path is the subpath between $x_i$ and $x_j$, whose internal vertices from $X$ are exactly $x_{i+1},\ldots,x_{j-1}$; hence their number is $j-i-1$. Thus $X$ is  $k$-mutual visible if and only if $j-i-1\le k$ for all $1\le i<j\le m$. The maximum value of $j-i-1$ occurs when $i=1$ and $j=m$, yielding $m-2\le k$, and therefore $m\le k+2$. Consequently, $\mu_k(P_n)\le \min\{n,k+2\}$. 

For the reverse inequality, if $k+2\le n$, choose any $k+2$ vertices of $P_n$. Then for every pair $x_i,x_j$ in this set, the number of internal vertices of the unique
$(x_i, x_j)$-subpath  is at most $k$, so the set is  $k$-mutual visible and $\mu_k(P_n)\ge k+2$. If $k+2>n$, then for every pair of vertices of $P_n$ the number of internal vertices on their unique shortest path is at most $n-2<k$, so $V(P_n)$ itself is  $k$-mutual visible and $\mu_k(P_n)=n$. Hence $\mu_k(P_n)=\min\{n,k+2\}$.
\end{proof}
\begin{remark}
The bound in Theorem~\ref{p9.th2} is tight.
For the path $P_n$ we have $d=\operatorname{diam}(P_n)=n-1$ and, by Proposition~\ref{P9.prop1},
$\mu_k(P_n)=k+2$ for all $0 \leq k \leq n-2$, yielding equality.
\end{remark}
\begin{prop}\label{P9.prop2}
Let $n\ge 3$ and let $k$ be an integer with
$0\le k\le n-2$.
Then $ \mu_k(C_n)=\min\{ n, 2k+3 \}$.
\end{prop}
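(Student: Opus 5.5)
The upper bound follows directly from results already stated. For the lower bound I would split into two cases according to whether $2k+3\ge n$ or $2k+3<n$. In the second case I would give an explicit construction of $2k+3$ vertices spread out as evenly as possible around the cycle.

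\textbf{Upper bound.} Trivially $\mu_k(C_n)\le n$. The girth of $C_n$ is $g=n$, so Theorem~\ref{P9.th4} gives $\mu_k(C_n)\le n-n+2k+3=2k+3$. Hence $\mu_k(C_n)\le \min\{n,2k+3\}$.

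\textbf{Lower bound when $n\le 2k+3$.} Here $\operatorname{diam}(C_n)=\lfloor n/2\rfloor\le k+1$, since $n\le 2k+3$ forces $\lfloor n/2\rfloor\le k+1$. So $k\ge \operatorname{diam}(C_n)-1$, and Theorem~\ref{P9.th3} yields $\mu_k(C_n)=n$.

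\textbf{Lower bound when $n>2k+3$.} Put $q=2k+3$. Label the cycle $0,1,\ldots,n-1$ and let $X=\{s_i=\lfloor in/q\rfloor : 0\le i\le q-1\}$. These $q$ vertices are distinct because $n>q$.

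Take two members $s_i,s_j$ of $X$. The two $(s_i,s_j)$-arcs of the cycle cover $m$ and $q-m$ consecutive gaps, respectively, for some $1\le m\le q-1$. They contain $m-1$ and $q-m-1$ internal vertices of $X$. Since $(m-1)+(q-m-1)=2k+1$, one of the two arcs, say the one covering $m\le k+1$ gaps, has at most $k$ internal vertices of $X$.

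It remains to check that this arc is a shortest path, that is, its length is at most $\lfloor n/2\rfloor$. Its length has the form $\lfloor (i+m)n/q\rfloor-\lfloor in/q\rfloor$, indices read cyclically. This is at most $\lceil mn/q\rceil\le\lceil (k+1)n/(2k+3)\rceil$. Now
\[
\frac{(k+1)n}{2k+3}=\frac n2-\frac{n}{2(2k+3)}<\frac n2-\frac12,
\]
because $n>2k+3$. Splitting into $n$ even and $n$ odd, the ceiling is therefore at most $\lfloor n/2\rfloor$. So $X$ is a $k$-mutual visible set of size $2k+3$.

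The main obstacle is this last estimate: the floor-function bookkeeping, including wrap-around arcs, needed to show that the "few-obstacle" arc is genuinely geodesic. The strict inequality coming from $n>2k+3$ is exactly what makes it work.
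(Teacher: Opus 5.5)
Your proof is correct. The only genuine difference from the paper is the construction for $2k+3<n$.

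The paper places two runs of consecutive vertices on the cycle: a block $A$ of size $k+2$ and a block $B$ of size $k+1$. They are separated by $s=\lfloor r/2\rfloor$ and $t=\lceil r/2\rceil$ unselected vertices, where $r=n-(2k+3)$. Pairs inside one block are treated separately. For $a_i\in A$ and $b_j\in B$ the paper computes both arcs explicitly, and shows that the arc with at most $k$ selected internal vertices is a shortest one, according as $i>j$ or $i\le j$. The choice $0\le t-s\le 1$ is what makes that length comparison work.

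Your evenly spaced set $\{\lfloor in/q\rfloor\}$ handles all pairs at once. The count $(m-1)+(q-m-1)=2k+1$ selects the good arc, and the single estimate $\lceil (k+1)n/(2k+3)\rceil\le\lfloor n/2\rfloor$ shows it is geodesic; this is exactly where $n>2k+3$ enters. Wrap-around arcs cause no trouble, because $\lfloor x+n\rfloor=\lfloor x\rfloor+n$ for integer $n$. So you trade the paper's case analysis for floor-function bookkeeping and get a more symmetric construction.

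For the other two parts you cite Theorem~\ref{P9.th4} with $g=n$ and Theorem~\ref{P9.th3}, whereas the paper argues directly. In $C_n$ the only $(u,v)$-paths are the two arcs, so the girth argument reduces to the paper's direct count with $x_1$ and $x_{k+3}$. The diameter step is the same as the paper's.
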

\begin{proof}
Let $X\subseteq V(C_n)$ be a  $k$-mutual visible set, and list the vertices of $X$
in cyclic order as $x_1,x_2,\ldots,x_m$, where $m=|X|$.

Suppose that $m\ge 2k+4$.
Consider the vertices $x_1$ and $x_{k+3}$.
Along one $(x_1,x_{k+3})$-path of the cycle, the internal vertices belonging to $X$
are $x_2,\ldots,x_{k+2}$, yielding $k+1$ vertices, while along the complementary arc
the number of internal vertices from $X$ is $m-2-(k+1)=m-k-3\ge k+1$, since $m\ge 2k+4$.
Thus every shortest $(x_1,x_{k+3})$-path contains at least $k+1$ internal vertices of $X$,
contradicting the $k$-mutual visibility of $X$.
Hence $m\le 2k+3$, and therefore $\mu_k(C_n)\le \min\{n,2k+3\}$.

Every shortest path in $C_n$ has at most $\lfloor n/2\rfloor-1$ internal vertices.
If $2k+3\ge n$, then $k\ge (n-3)/2$, and since $k$ is an integer it follows that
$k\ge \lceil (n-3)/2\rceil=\lfloor n/2\rfloor-1$.
Hence $V(C_n)$ itself is a  $k$-mutual visible set, and consequently $\mu_k(C_n)=n$.

Now assume that $2k+3<n$.
Set $r=n-(2k+3)$ and choose integers
$s=\lfloor r/2\rfloor$ and $t=\lceil r/2\rceil$, so that $s+t=r$ and $t-s\in\{0,1\}$.
Select two blocks of consecutive vertices on $C_n$, namely a block
$A=\{a_0,a_1,\ldots,a_{k+1}\}$ of size $k+2$ and a block
$B=\{b_0,b_1,\ldots,b_k\}$ of size $k+1$, and place them so that there are exactly
$s$ unselected vertices between $a_{k+1}$ and $b_0$ and exactly $t$ unselected vertices
between $b_k$ and $a_0$.
This is possible since $(k+2)+(k+1)+s+t=n$.
Let $X=A\cup B$, so that $|X|=2k+3$.

If $u,v\in A$ (or $u,v\in B$), then one $(u,v)$-path is the subpath induced by the
corresponding block, whose length is at most $k+1$, while the complementary $(u,v)$-path
traverses vertices outside the block and has length at least $n-k-1>k+1$ (since $2k+3 < n$).
Hence the shortest $(u,v)$-path is contained entirely in the corresponding block and has at most $k$ internal vertices from $X$.
Now let $u=a_i\in A$ and $v=b_j\in B$, and denote by $P_s$ and $P_t$ the two $(u,v)$-paths
passing through the gaps of sizes $s$ and $t$, respectively.
The numbers of internal vertices from $X$ on these paths are
$c_s=(k+1-i)+j$ and $c_t=i+(k-j)$, respectively, while their lengths are
$\ell_s=(k+1-i)+(s+1)+j$ and $\ell_t=i+(t+1)+(k-j)$.
Note that $c_s\le k$ if and only if $i>j$, whereas $c_t\le k$ if and only if $i\le j$.

If $i>j$, then $j-i\le -1$, and hence
$\ell_s-\ell_t=1+s-t+2(j-i)<0$, so $P_s$ is the unique shortest $(u,v)$-path and
$c_s\le k$.
If $i\le j$, then $\ell_s-\ell_t\ge 0$, so $P_t$ is a shortest $(u,v)$-path and
$c_t\le k$.
Therefore, in all cases, there exists a shortest $(u,v)$-path containing at most $k$
internal vertices from $X$.
It follows that $X$ is a  $k$-mutual visible set, and hence $\mu_k(C_n)\ge 2k+3$.

Combining the upper and lower bounds yields
$\mu_k(C_n)=\min\{n,2k+3\}$.

\end{proof}
\begin{remark}
The bound in Theorem~\ref{P9.th4} is tight.
For the cycle $C_n$ we have $g=n$, and by Proposition~\ref{P9.prop2}$,
\mu_k(C_n)=\min\{n,2k+3\}$.
When $2k+3\le n$, it follows that
$\mu_k(C_n)=2k+3=n-g+(2k+3)$.
\end{remark}

\begin{prop}\label{P9.prop3}
Let $m,n\ge 1$. Then, for every $k\ge 1$, $\mu_k(K_{m,n})=m+n$.
\end{prop}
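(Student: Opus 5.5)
The plan is to apply Theorem~\ref{P9.th3}: $\mu_k(G)=|V(G)|$ holds exactly when $k\ge \operatorname{diam}(G)-1$. Since $K_{m,n}$ has order $m+n$, it suffices to show $\operatorname{diam}(K_{m,n})\le 2$. Then every $k\ge 1$ satisfies $k\ge \operatorname{diam}(K_{m,n})-1$, and the conclusion $\mu_k(K_{m,n})=m+n$ follows at once.

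To bound the diameter, let $U$ and $W$ be the bipartition classes, with $|U|=m$ and $|W|=n$. A vertex of $U$ and a vertex of $W$ are adjacent, so their distance is $1$. Two distinct vertices $u,u'\in U$ are nonadjacent, but they have a common neighbour $w\in W$, because $n\ge 1$. Hence $(u,w,u')$ is a shortest path and $d(u,u')=2$. The same argument applies to two vertices of $W$, using $m\ge 1$. Therefore $\operatorname{diam}(K_{m,n})\le 2$.

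For a self-contained version, one can also check directly that $X=V(K_{m,n})$ is $k$-mutually visible. Every pair of distinct vertices is either adjacent, giving a path with no internal vertices, or joined by a geodesic of length $2$ with exactly one internal vertex. In both cases the number of internal vertices is at most $1\le k$. This gives $\mu_k(K_{m,n})\ge m+n$, and the trivial bound $\mu_k(K_{m,n})\le m+n$ gives equality.

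The only step needing care is the degenerate case. When $m=n=1$ the graph is $K_2$, which has diameter $1$, and when $m=1$ or $n=1$ the graph is a star. These cases are still covered, since the diameter is then $1$ or $2$. The only thing to confirm is that a common neighbour exists, which the hypotheses $m,n\ge 1$ guarantee. Nothing else should be an obstacle.
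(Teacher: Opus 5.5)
Your proof is correct and is essentially the paper's argument. The paper directly checks that $V(K_{m,n})$ is $k$-mutual visible because every pair of vertices is either adjacent or joined by a geodesic of length $2$; this is exactly your self-contained paragraph, and routing it through Theorem~\ref{P9.th3} with $\operatorname{diam}(K_{m,n})\le 2$ is only a repackaging, since that theorem's ``if'' direction is proved by the same observation.
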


\begin{proof}
Let $K_{m,n}$ be the complete bipartite graph with bipartition $(A,B)$, where $|A|=m$ and $|B|=n$, and set $X =V(K_{m,n})=A\cup B$. For any two distinct vertices $u,v\in X$, if $u$ and $v$ lie in different parts, then $uv\in E(K_{m,n})$ and a shortest $(u,v)$-path has no internal vertices, while if $u$ and $v$ lie in the same part, then $d(u,v)=2$ and every shortest $(u,v)$-path has exactly one internal vertex. Hence, for every pair $u,v\in X$, there exists a shortest $(u,v)$-path with at most $1\le k$ internal vertex, since $k\ge 1$. Thus $X$ is a  $k$-mutual visible set, implying $\mu_k(K_{m,n})\ge |X|=m+n$. The reverse inequality $\mu_k(K_{m,n})\le |V(K_{m,n})|=m+n$ is immediate, and therefore $\mu_k(K_{m,n})=m+n$.
\end{proof}
\begin{theorem}\label{P9.th8}
Let $G$ be a graph of order $n$. Then, for every $k\ge 2$, $\mu_k(G\circ K_m)=mn+\mu_{k-2}(G)$.
\end{theorem}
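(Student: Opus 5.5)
The plan is to treat the two inequalities separately. Throughout I use the structure of $G\circ K_m$. For $v\in V(G)$ write $K^v$ for the copy of $K_m$ attached to $v$. Two facts will be used repeatedly.
\begin{itemize}
\item If $x\in K^u$ and $y\in K^v$ with $u\neq v$, then $d(x,y)=d_G(u,v)+2$. Every shortest $(x,y)$-path has the form $x,P,y$, where $P$ is a shortest $(u,v)$-path in $G$, so its internal vertices are exactly $V(P)$, endpoints $u,v$ included.
\item $V(G)$ induces a convex subgraph. Shortest paths between two vertices of $G$, or between a copy vertex and a vertex of $G$, pass only through $G$ apart from the copy endpoint.
\end{itemize}

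\emph{Upper bound.} Let $X$ be a maximum $k$-mutual visible set of $G\circ K_m$, put $S=X\cap V(G)$, and let $C=\{v\in V(G): K^v\cap X\neq\emptyset\}$, $c=|C|$. Set $T=S\cap C$. I claim $T$ is a $(k-2)$-mutual visible set of $G$. For $s,t\in T$, pick $x\in K^s\cap X$ and $y\in K^t\cap X$. The $k$-visibility of $x,y$ gives a path $x,P,y$ with $P$ a shortest $(s,t)$-path in $G$ and $|V(P)\cap X|\le k$. Since $s,t\in X$ are among these vertices, $P$ has at most $k-2$ internal vertices in $S\supseteq T$. Hence $|T|\le \mu_{k-2}(G)$.

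Each $s\in S\setminus T$ is a vertex whose copy misses $X$ entirely, so $|S\setminus T|\le n-c\le m(n-c)$ because $m\ge1$. Therefore
\[
|X|\le mc+|T|+|S\setminus T|\le mc+\mu_{k-2}(G)+m(n-c)=mn+\mu_{k-2}(G).
\]

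\emph{Lower bound.} Take $S$ a maximum $(k-2)$-mutual visible set of $G$ and $X=S\cup\bigcup_v V(K^v)$, so $|X|=mn+\mu_{k-2}(G)$. The pairs are checked by type.
\begin{itemize}
\item Two vertices of the same $K^v$ are adjacent.
\item Two vertices of $S$ are $(X,k)$-visible by the convexity of $G$, since $k-2\le k$.
\item A copy vertex $x\in K^s$ with $s\in S$, paired with anything, is handled by shortest paths from $s$ that have few internal $S$-vertices.
\item The delicate pairs are $x\in K^u$ and $y\in K^v$ where $u$ or $v$ is not in $S$, and likewise $x\in K^u$ with $u\notin S$ against $t\in S$. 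Here we need a shortest $(u,v)$-path in $G$ meeting $S$ in at most $k$ vertices in total. This is not implied merely by $S$ being $(k-2)$-mutual visible, since that only controls geodesics between vertices of $S$.
\end{itemize}

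I expect this to be the main obstacle. The first attempt is to extend a geodesic $u\to v$ through nearest $S$-vertices: choose $s,t\in S$ on some geodesic and bound the $S$-vertices outside the segment $[s,t]$. If that fails, I would instead choose $S$ among maximum $(k-2)$-mutual visible sets in an extremal way, for instance minimizing $\sum_{u,v}\min_P |V(P)\cap S|$ over geodesics $P$. I would also test small cases such as cycles, using Proposition~\ref{P9.prop2}, to see whether the equality genuinely needs a hypothesis of this kind. That check would settle whether the lower bound holds as stated or needs such a hypothesis.
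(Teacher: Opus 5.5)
\textbf{The upper bound is fine; the lower bound is not proved.} Your upper bound is correct and is essentially the paper's argument. The paper uses $D=\{v : \{v\}\cup V(K^v)\subseteq X\}$ and the count $|X|\le mn+|D|$. You use $T=S\cap C$ and absorb $S\setminus T$ via $n-c\le m(n-c)$. Both show that the relevant set is $(k-2)$-mutual visible by looking at a geodesic $x,P,y$ between two selected copy vertices.

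The genuine gap is the lower bound, which your proposal leaves open and even doubts. What is needed is this: for every pair $u\neq v$ in $V(G)$ there is a $(u,v)$-geodesic $P$ in $G$ with $|V(P)\cap S|\le k$, where endpoints are counted. Your third bullet also depends on this. If $x\in K^s$ with $s\in S$ and $y\in K^v$ with $v\notin S$, then $(k-2)$-visibility ``from $s$'' says nothing, because it only controls geodesics between two vertices of $S$. So that bullet is not actually handled yet either.

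\textbf{How to close the gap.} The idea you list as your ``first attempt'' works directly; no extremal choice of $S$ and no extra hypothesis is needed.
\begin{enumerate}
\item Take any $(u,v)$-geodesic $P$. If $|V(P)\cap S|\ge k+1\ge 3$, let $s$ and $t$ be the first and last vertices of $S$ along $P$ (possibly $s=u$ or $t=v$).
\item Since $S$ is $(k-2)$-mutual visible, there is an $(s,t)$-geodesic $R$ with at most $k-2$ internal vertices in $S$.
\item The walk $P[u,s]\,R\,P[t,v]$ has length $d(u,s)+d(s,t)+d(t,v)=d_G(u,v)$. A walk of minimum length repeats no vertex, so it is a $(u,v)$-geodesic.
\item By the choice of $s$ and $t$, the pieces $P[u,s)$ and $P(t,v]$ avoid $S$. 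Hence the new geodesic meets $S$ in at most $2+(k-2)=k$ vertices.
\end{enumerate}
Using this $P$ in $x,P,y$ (for $x\in K^u$, $y\in K^v$, $u\neq v$) and in $x,P$ (for $x\in K^u$, $t\in S$, $u\neq t$, giving at most $k-1$ internal vertices) covers every remaining pair type. So the equality holds as stated.

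For comparison, the paper's own lower bound simply asserts that the $(u,v)$-geodesic ``can be chosen so that it contains at most $k$ vertices from $X$ in total, counting possible endpoints,'' with no justification. You located exactly the weak point, but a proof has to supply the replacement argument above.
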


\begin{proof}
For each vertex $v\in V(G)$, let $K_m^v$ denote the copy of $K_m$
attached to $v$ in $G\circ K_m$, and define $B_v=\{v\}\cup V(K_m^v)$.
Then $B_v$ induces a complete graph of order $m+1$.

Let $k\ge 2$. Let $X$ be a maximum $(k-2)$-mutual visible set of
$G$. Define
\[
S=\left(\bigcup_{v\in V(G)}V(K_m^v)\right)\cup X.
\]
Then $|S|=mn+\mu_{k-2}(G)$. We claim that $S$ is a $k$-mutual visible set of $G\circ K_m$. Let $a,b\in S$ be distinct. If $a$ and $b$ belong to the same block
$B_v$, then they are adjacent. Otherwise, suppose that $a$ lies in
$B_u$ and $b$ lies in $B_v$, where $u\ne v$. A shortest $(a,b)$-path in
$G\circ K_m$ is obtained from a shortest $(u,v)$-path in $G$ by adding the necessary edges inside $B_u$ and $B_v$. Since $X$ is mutual $(k-2)$-visible in $G$, the shortest $(u,v)$-path in
$G$ can be chosen so that it contains at most $k$ vertices from $X$ in total, counting possible endpoints. Hence the corresponding shortest $(a,b)$-path in $G\circ K_m$ contains at most $k$ internal vertices from $S$. Therefore $S$ is $k$-mutual visible, and so $\mu_k(G\circ K_m)\ge mn+\mu_{k-2}(G)$.

Conversely, let $S$ be a maximum $k$-mutual visible set of $G\circ K_m$. By \eqref{P9.eq2}, $|S|\leq mn+|D|$, where $D=\{v\in V(G): B_v\subseteq S\}$. We claim that $D$ is a $(k-2)$-mutual visible set of $G$. Let $u,v\in D$ be distinct. Since $B_u,B_v\subseteq S$, choose vertices $a\in V(K_m^u)$ and $b\in V(K_m^v)$. Then $a,b\in S$. Since $S$ is $k$-mutual visible, there exists a shortest $(a,b)$-path in
$G\circ K_m$ containing at most $k$ internal vertices from $S$. Every shortest $(a,b)$-path has the form
$(a, P, b)$ where $P$ is a shortest $(u,v)$-path in $G$. The vertices $u$ and $v$ are internal vertices of this path and both belong to $S$. Thus they already contribute two internal vertices from $S$. Therefore the internal part of $P$ contains at most $k-2$ vertices from $S$ hence from $D$. Hence
$u$ and $v$ are $(D,k-2)$-visible in $G$. Thus $D$ is a $(k-2)$-mutual
visible set of $G$, and so
$|D|\le \mu_{k-2}(G)$. Consequently,
$|S|\le mn+\mu_{k-2}(G)$. Combining the lower and upper bounds gives
$\mu_k(G\circ K_m)=mn+\mu_{k-2}(G)$
for every $k\ge 2$.
\end{proof}
\begin{coro}
Let $G_0=G$ and $
G_i=G_{i-1}\circ K_1,\ 1\le i\le \left\lfloor \frac{k}{2}\right\rfloor.$
Then
\[
\mu_k\left(G_{\left\lfloor k/2\right\rfloor}\right)
=
\left(2^{\left\lfloor k/2\right\rfloor}-1\right)n+
\begin{cases}
\mu(G), & \text{if } k \text{ is even},\\[2mm]
\mu_1(G), & \text{if } k \text{ is odd}.
\end{cases}
\]
\end{coro}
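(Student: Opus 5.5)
The plan is to iterate Theorem~\ref{P9.th8} with $m=1$, tracking how the order of the graph grows at each corona step. Write $t=\lfloor k/2\rfloor$ and note that $G_i=G_{i-1}\circ K_1$ has order
\[
|V(G_i)|=2|V(G_{i-1})|, \qquad\text{so}\qquad |V(G_i)|=2^i n .
\]
I will prove the following more general claim: for every integer $s\ge 0$ and every integer $j$ with $0\le j\le \lfloor s/2\rfloor$,
\[
\mu_s(G_j)=(2^j-1)n+\mu_{s-2j}(G).
\]
Taking $s=k$ and $j=t$ then gives the corollary. When $k$ is even we have $k-2t=0$ and $\mu_0(G)=\mu(G)$. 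When $k$ is odd we have $k-2t=1$ and obtain $\mu_1(G)$.

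The claim is proved by induction on $j$, simultaneously for all admissible $s$. The base case $j=0$ is trivial, since $G_0=G$ and the formula reads $\mu_s(G)=0\cdot n+\mu_s(G)$. For the inductive step take $j\ge 1$ with $j\le\lfloor s/2\rfloor$, so that $s\ge 2$. Applying Theorem~\ref{P9.th8} to the graph $G_{j-1}$ (of order $2^{j-1}n$) with $m=1$ and parameter $s$ gives
\[
\mu_s(G_j)=2^{j-1}n+\mu_{s-2}(G_{j-1}).
\]
Since $j-1\le \lfloor (s-2)/2\rfloor$, the induction hypothesis applies with $s-2$ in place of $s$ and yields $\mu_{s-2}(G_{j-1})=(2^{j-1}-1)n+\mu_{s-2j}(G)$. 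Adding these gives
\[
\mu_s(G_j)=2^{j-1}n+(2^{j-1}-1)n+\mu_{s-2j}(G)=(2^j-1)n+\mu_{s-2j}(G),
\]
as required.

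The only delicate point is bookkeeping. In Theorem~\ref{P9.th8}, the quantity $n$ denotes the order of the base graph, here $G_{j-1}$, and not the original $n$. One must also check that the hypothesis ``$k\ge 2$'' of Theorem~\ref{P9.th8} holds at every step. This is guaranteed because the tolerance parameter at step $j$ is $s$ with $s\ge 2j\ge 2$. Finally, the notation $G\circ K_m$ is the corona $G\odot K_m$ defined in the preliminaries, so Theorem~\ref{P9.th8} applies directly.
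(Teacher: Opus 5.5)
Your proposal is correct and follows the paper's argument. The paper also applies Theorem~\ref{P9.th8} with $m=1$ repeatedly, using $|V(G_i)|=2^i n$, and writes this as the telescoping chain $\mu_{2t+\varepsilon}(G_t)=|V(G_{t-1})|+\mu_{2t+\varepsilon-2}(G_{t-1})=\cdots=\sum_{i=0}^{t-1}2^i n+\mu_\varepsilon(G)$; your induction on $j$, run over all admissible $s$, makes that chain rigorous, and your check that the tolerance parameter stays at least $2$ at each application of Theorem~\ref{P9.th8} is a step the paper leaves implicit.
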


\begin{proof}
Let $ t=\left\lfloor \frac{k}{2}\right\rfloor$. Then $k=2t+\varepsilon$, where $\varepsilon\in\{0,1\}$. Since $|V(G_i)|=2^i n$ for each $i\ge 0$, repeated application of Theorem~\ref{P9.th8} gives

\[
\begin{aligned}
\mu_k(G_t)
&=\mu_{2t+\varepsilon}(G_t)\\
&=|V(G_{t-1})|+\mu_{2t+\varepsilon-2}(G_{t-1})\\
&=|V(G_{t-1})|+|V(G_{t-2})|
  +\mu_{2t+\varepsilon-4}(G_{t-2})\\
&\hspace{1cm}\vdots\\
&=\sum_{i=0}^{t-1}|V(G_i)|+\mu_\varepsilon(G)\\
&=\sum_{i=0}^{t-1}2^i n+\mu_\varepsilon(G)=(2^t-1)n+\mu_\varepsilon(G).
\end{aligned}
\]
If $k$ is even, then $\varepsilon=0$ and
$\mu_\varepsilon(G)=\mu(G)$, whereas if $k$ is odd, then
$\varepsilon=1$ and $\mu_\varepsilon(G)=\mu_1(G)$.
The result follows.
\end{proof}
\section{$k$-Mutual visibility of block graphs}
\begin{defn}
Let $G$ be a block graph, and let $H$ be its block--cutpoint tree with bipartition
$
V(H)=A \cup \mathcal{B}'$,
where $A$ is the set of articulation vertices of $G$ and $\mathcal{B}'$ is the set of vertices corresponding to the blocks of $G$.
A subset $Z\subseteq V(H)$ is $k$-admissible if, for every pair of distinct vertices
$\alpha,\beta\in Z$, $
\bigl|\bigl(V(P_H(\alpha,\beta))\setminus\{\alpha,\beta\}\bigr)\cap (Z\cap A)\bigr|\le k$,
where $P_H(\alpha,\beta)$ denotes the unique $(\alpha, \beta)$-path in the tree $H$.
\end{defn}
Intuitively, a $k$-admissible set $Z$ is one in which no two selected vertices in the
block--cutpoint tree are separated by more than $k$ selected articulation vertices.
Since articulation vertices represent mandatory passage points between blocks in a
block graph, the definition bounds the number of such obstructions along the unique
path connecting any pair of vertices of $Z$.
\begin{lemma}\label{P9.lem1}
Let $G$ be a block graph, and let $H$ be its block--cutpoint tree with bipartition
$V(H)=A \cup \mathcal{B}'$,
where $A$ is the set of articulation vertices of $G$ and $\mathcal{B}'$ is the set of vertices corresponding to the blocks of $G$.
If $Z\subseteq V(H)$ is $k$-admissible, then
$X_Z=\Big(\bigcup_{b\in Z\cap \mathcal{B}'} (V(B_b)\setminus A)\Big)\cup (Z\cap A)$
is a  $k$-mutual visible set in $G$, where $B_b$ denotes the block of $G$ corresponding to the vertex $b\in\mathcal{B}'$.
\end{lemma}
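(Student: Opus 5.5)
The plan is to reduce the statement to the structure of geodesics in a block graph, read off along the block--cutpoint tree $H$. First I will define a map $\varphi\colon X_Z\to Z$. If $x\in Z\cap A$, set $\varphi(x)=x$. If $x\in V(B_b)\setminus A$ for some $b\in Z\cap\mathcal{B}'$, set $\varphi(x)=b$. The second case is well defined because a non-articulation vertex lies in exactly one block. Then $X_Z\cap A=Z\cap A$, and every vertex of $X_Z\setminus A$ is a non-articulation vertex.

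The key structural step is the following claim. For distinct $u,v\in V(G)$, every shortest $(u,v)$-path in $G$ has as its internal vertices exactly the articulation vertices of $G$ lying in $V(P_H(\varphi(u),\varphi(v)))\setminus\{\varphi(u),\varphi(v)\}$. I state this for $u,v\in X_Z$, where $\varphi$ is defined. To prove it, let $b_1,c_1,b_2,\dots,c_{r-1},b_r$ be the tree path in $H$ between the blocks (or cut vertices) associated with $u$ and $v$. Each cut vertex $c_i$ on it separates $u$ from $v$ in $G$, so every $(u,v)$-path must pass through all the $c_i$. Since each block is a clique, the path $u,c_1,\dots,c_{r-1},v$ exists, and it is shortest by this separation argument. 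Any other shortest path visits the $c_i$ in order with a single edge between consecutive ones, so its internal vertices are exactly the $c_i$. In particular no non-articulation vertex is ever internal to a geodesic, because such a vertex lies in a single block and a geodesic meets a clique in at most two vertices.

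With the claim in hand, the main argument is short. Take distinct $u,v\in X_Z$. If $\varphi(u)=\varphi(v)$, then $u$ and $v$ lie in a common block, so they are adjacent and have no internal vertices. Otherwise, by the claim, the internal vertices of the geodesic that belong to $X_Z$ lie in $X_Z\cap A=Z\cap A$. They are contained in $\bigl(V(P_H(\varphi(u),\varphi(v)))\setminus\{\varphi(u),\varphi(v)\}\bigr)\cap(Z\cap A)$. By $k$-admissibility of $Z$ applied to $\alpha=\varphi(u)$ and $\beta=\varphi(v)$, this set has size at most $k$. Hence $u$ and $v$ are $(X_Z,k)$-visible, and $X_Z$ is $k$-mutual visible.

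The main obstacle is making the geodesic claim rigorous, especially its endpoint cases. When $u$ is itself a cut vertex, the path in $H$ starts at $u$ rather than at a block containing $u$, and one must check that no extra cut vertex is counted. When $u$ is a non-articulation vertex of $B_b$, the relevant tree path must start at $b$ itself, not at some other block. I would handle this by a short case split on whether each endpoint is in $A$. In each case I would verify that the cut vertices strictly between $\varphi(u)$ and $\varphi(v)$ in $H$ are exactly the internal vertices of the geodesic, with the endpoints excluded.
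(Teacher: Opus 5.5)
Your proposal is correct and follows essentially the same route as the paper. Both use the same map $\varphi$ and the fact that the articulation vertices strictly inside $P_H(\varphi(u),\varphi(v))$ separate $u$ from $v$ and are consecutively adjacent, so the path through them is a geodesic; $k$-admissibility then bounds the selected internal vertices. You also explicitly treat the case $\varphi(u)=\varphi(v)$ and note that $\varphi$ lands in $Z$ (which is needed to invoke admissibility), two edge cases the paper leaves implicit.
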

\begin{proof}
Let $x,y\in X_Z$ be distinct. Define a map $\varphi:X_Z\to V(H)$ by
\[
\varphi(x)=
\begin{cases}
x, & \text{if } x\in A,\\
b, & \text{if } x\in V(B_b)\setminus A \text{ for some } b\in\mathcal{B}'.
\end{cases}
\]
This map is well defined, since every vertex of $G$ that is not an articulation vertex belongs to a unique block.

Set $\alpha=\varphi(x)$ and $\beta=\varphi(y)$. Let $P_H(\alpha,\beta)$ be the unique $(\alpha,\beta)$-path in the tree $H$.
Let $a_0,a_1,\dots,a_t$ be the articulation vertices on $P_H(\alpha,\beta)$, listed in the order in which they appear along the path from $\alpha$.
If $t\ge 1$, then for each $i\in\{0,1,\dots,t-1\}$, the vertices $a_i$ and $a_{i+1}$ are separated on $P_H(\alpha,\beta)$ by exactly one
block-vertex.

We construct an $(x,y)$-path $P$ in $G$ as follows. If $x\notin A$, then $x\in V(B_\alpha)\setminus A$ and
$a_0\in V(B_\alpha)\cap A$, so $xa_0\in E(G)$ since $B_\alpha$ is a clique; if $x\in A$, we start at $x=a_0$.
For each $i\in\{0,1,\dots,t-1\}$, the vertices $a_i$ and $a_{i+1}$ lie in a common block; since every block of a block graph
is a clique, we have $a_i a_{i+1}\in E(G)$. Finally, if $y\notin A$, then $a_t\in V(B_\beta)\cap A$ and
$y\in V(B_\beta)\setminus A$, so $a_t y\in E(G)$; if $y\in A$, we end at $y=a_t$.
Concatenating these edges yields an $(x,y)$-path $P$.

We claim that $P$ is a shortest $(x,y)$-path in $G$. Let $Q$ be any $(x,y)$-path in $G$.
Consider the sequence of blocks of $G$ met by $Q$. Whenever $Q$ moves from one block to another, it must pass through
their common articulation vertex. Hence $Q$ determines a walk in the block--cutpoint tree $H$ from $\alpha$ to $\beta$.
Since $H$ is a tree, this walk contains the unique $(\alpha,\beta)$-path $P_H(\alpha,\beta)$ as a subwalk.
In particular, $Q$ must meet the articulation vertices $a_0,a_1,\dots,a_t$ in this order.

We now compare lengths. If $x\notin A$, then any $(x,y)$-path must enter the articulation set $A$ through a vertex of
$B_\alpha\cap A$, and thus uses at least one edge before reaching $a_0$; the path $P$ uses exactly one edge $xa_0$.
Similarly, if $y\notin A$, then any $(x,y)$-path must use at least one edge after leaving $a_t$; the path $P$ uses exactly one
edge $a_t y$. Moreover, for each $i\in\{0,1,\dots,t-1\}$, any $(x,y)$-path must pass through $a_i$ and $a_{i+1}$ in this order.
Since $a_i$ and $a_{i+1}$ lie in a common block and every block of a block graph is a clique,
we have $a_i a_{i+1}\in E(G)$.
Therefore, any $(x,y)$-path has length at least the number of these forced transitions, together with the possible initial
and final edges, and the path $P$ attains this lower bound. Thus $P$ is a shortest $(x,y)$-path in $G$.

Finally, the only internal vertices of $P$ that can lie in $X_Z$ are articulation vertices in $Z\cap A$ that occur internally
on $P_H(\alpha,\beta)$. Hence
\[
\bigl|\bigl(V(P)\setminus\{x,y\}\bigr)\cap X_Z\bigr|
=
\bigl|\bigl(V(P_H(\alpha,\beta))\setminus\{\alpha,\beta\}\bigr)\cap (Z\cap A)\bigr|
\le k,
\]
where the inequality holds because $Z$ is $k$-admissible.
Thus $x$ and $y$ are $(X_Z,k)$-visible. Since $x,y\in X_Z$ were arbitrary, $X_Z$ is a  $k$-mutual visible set in $G$.
\end{proof}

\begin{lemma}\label{P9.lem2}
Let $G$ be a block graph, and let $H$ be its block--cutpoint tree with bipartition
$V(H)=A \cup \mathcal{B}'$,
where $A$ is the set of articulation vertices of $G$ and $\mathcal{B}'$ is the set of vertices corresponding to the blocks of $G$.
If $X\subseteq V(G)$ is a  $k$-mutual visible set, then
$Z=(X\cap A)\cup\{\, b\in \mathcal{B}': X\cap (V(B_b)\setminus A)\neq\emptyset \,\}$
is $k$-admissible, where $B_b$ denotes the block of $G$ corresponding to the vertex $b\in\mathcal{B}'$.
\end{lemma}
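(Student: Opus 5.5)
To prove Lemma~\ref{P9.lem2}, we take two distinct vertices $\alpha,\beta\in Z$ and exhibit two vertices $x,y\in X$ whose visibility in $G$ forces the admissibility inequality. For each $\gamma\in Z$ we choose a representative $r(\gamma)\in X$. If $\gamma\in A$, we set $r(\gamma)=\gamma$, which lies in $X$ by definition of $Z$. If $\gamma=b\in\mathcal{B}'$, we let $r(\gamma)$ be any vertex of $X\cap(V(B_b)\setminus A)$, which is nonempty by definition of $Z$. With the map $\varphi$ from the proof of Lemma~\ref{P9.lem1}, this gives $\varphi(r(\gamma))=\gamma$. Distinct $\alpha,\beta$ then yield distinct $x=r(\alpha)$ and $y=r(\beta)$. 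This is clear unless $\alpha\ne\beta$ are both blocks and the representatives coincide, which is impossible because a non-articulation vertex lies in a unique block.

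Since $X$ is $k$-mutual visible, there is a shortest $(x,y)$-path $Q$ with at most $k$ internal vertices in $X$. The key step is to show that every $(x,y)$-path in $G$, in particular $Q$, passes through every articulation vertex that lies internally on $P_H(\alpha,\beta)$. This is the separation argument already used in Lemma~\ref{P9.lem1}: $Q$ induces a walk in the tree $H$ from $\alpha$ to $\beta$, and that walk must contain all vertices of $P_H(\alpha,\beta)$. More directly, each internal articulation vertex $a$ on $P_H(\alpha,\beta)$ separates $x$ from $y$ in $G$. Removing $a$ from $H$ puts $\alpha$ and $\beta$ in different components, and the corresponding vertices of $G$ lie in different components of $G-a$. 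Here we must check that $x$ and $y$ themselves are not equal to $a$. This holds because $a$ is internal to the path, so $a\ne\alpha,\beta$, and a non-articulation vertex is never equal to $a$.

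Next, each such $a$ is an internal vertex of $Q$, since $a\neq x,y$. Every articulation vertex in $(V(P_H(\alpha,\beta))\setminus\{\alpha,\beta\})\cap(Z\cap A)$ lies in $X\cap A$, so these vertices are distinct internal vertices of $Q$ belonging to $X$. Therefore
\[
\bigl|\bigl(V(P_H(\alpha,\beta))\setminus\{\alpha,\beta\}\bigr)\cap (Z\cap A)\bigr|\le \bigl|\bigl(V(Q)\setminus\{x,y\}\bigr)\cap X\bigr|\le k,
\]
which is exactly $k$-admissibility of $Z$. Note that the argument uses no property of shortest paths beyond the existence of $Q$, and does not need the clique structure of blocks, only the block--cutpoint tree separation property.

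The main obstacle is making the separation claim rigorous. I would phrase it via the standard fact that $a\in A$ is a cut vertex whose removal splits $G$ into pieces corresponding to the components of $H-a$, with each block or non-articulation vertex assigned to the component containing its tree vertex. The claim then reduces to checking that $\alpha$ and $\beta$ lie in different components of $H-a$, which is immediate since $H$ is a tree and $a$ is internal to $P_H(\alpha,\beta)$.
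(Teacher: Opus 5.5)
Your proposal is correct and follows the paper's proof essentially step for step. You pick representatives in $X$ mapping to $\alpha$ and $\beta$ under $\varphi$, take a witnessing shortest path, and show that each selected articulation vertex internal to $P_H(\alpha,\beta)$ separates the two representatives in $G$, differs from both endpoints, and lies in $X$; your explicit check that the two representatives are distinct is a small point the paper leaves implicit.
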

\begin{proof}
Let $\alpha,\beta\in Z$ be distinct. By definition, $Z$ is the disjoint union of $X\cap A$ and
$\{\, b\in \mathcal{B}' : X\cap (V(B_b)\setminus A)\neq\emptyset \,\}$.
Choose vertices $x_\alpha,x_\beta\in X$ as follows: if $\alpha\in X\cap A$, then set $x_\alpha=\alpha$; if $\alpha\in\mathcal{B}'$,
then $X\cap (V(B_\alpha)\setminus A)\neq\emptyset$ by the definition of $Z$, and we choose $x_\alpha$ to be any vertex in
$X\cap (V(B_\alpha)\setminus A)$. Define $x_\beta$ analogously.

Let $\varphi:V(G)\to V(H)$ be given by $\varphi(v)=v$ if $v\in A$, and $\varphi(v)=b$ if $v\in V(B_b)\setminus A$.
Then $\varphi(x_\alpha)=\alpha$ and $\varphi(x_\beta)=\beta$. Since $X$ is a  $k$-mutual visible set, there exists a shortest
$(x_\alpha,x_\beta)$-path $P$ in $G$ such that $|(V(P)\setminus\{x_\alpha,x_\beta\})\cap X|\le k$.

Let $P_H(\alpha,\beta)$ be the unique $(\alpha,\beta)$-path in the tree $H$, and set
$S=(V(P_H(\alpha,\beta))\setminus\{\alpha,\beta\})\cap (Z\cap A)$.
We claim that
$S\subseteq (V(P)\setminus\{x_\alpha,x_\beta\})\cap X$.
Indeed, let $a\in S$. By the definition of $S$, the vertex $a$ is an articulation vertex of $G$ that lies internally on
$P_H(\alpha,\beta)$, and hence $\alpha$ and $\beta$ lie in different components of $H-a$.
By the definition of the block--cutpoint tree, it follows that $x_\alpha$ and $x_\beta$ lie in different components of $G-a$.
Consequently, every $(x_\alpha,x_\beta)$-path in $G$ contains $a$, and in particular $a\in V(P)$. Moreover, $a\neq x_\alpha$ and $a\neq x_\beta$. Indeed, if $\alpha\in A$, then $x_\alpha=\alpha\neq a$, while if
$\alpha\in\mathcal{B}'$, then $x_\alpha\in V(B_\alpha)\setminus A$ whereas $a\in A$; the same argument applies to $x_\beta$.
Finally, since $a\in S\subseteq Z\cap A$ and $Z\cap A=X\cap A$ by definition of $Z$, we have $a\in X$.
Thus $a\in (V(P)\setminus\{x_\alpha,x_\beta\})\cap X$, proving the claim.

Therefore, $ |S|\le |(V(P)\setminus\{x_\alpha,x_\beta\})\cap X|\le k$.
Since $\alpha,\beta\in Z$ were arbitrary, it follows that $Z$ is $k$-admissible.
\end{proof}
\begin{theorem}\label{P9.th1}
Let $G$ be a block graph, and let $H$ be its block--cutpoint tree with bipartition
$V(H)=A \cup \mathcal{B}'$,
where $A$ is the set of articulation vertices of $G$ and $\mathcal{B}'$ is the set of vertices corresponding to the blocks of $G$.
For a $k$-admissible set $Z\subseteq V(H)$, define $X_Z=\Big(\bigcup_{b\in Z\cap \mathcal{B}'} (V(B_b)\setminus A)\Big)\cup (Z\cap A)$, where $B_b$ denotes the block of $G$ corresponding to the vertex $b\in\mathcal{B}'$.
Then $ \mu_k(G)=\max\{ |X_Z| : Z\subseteq V(H)\text{ is $k$-admissible} \}$.
\end{theorem}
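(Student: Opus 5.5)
The plan is to prove the equality as two inequalities, using Lemma~\ref{P9.lem1} for the lower bound and Lemma~\ref{P9.lem2} for the upper bound. Both lemmas are already stated, so all that remains is one containment argument.

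\emph{Lower bound.} Let $Z\subseteq V(H)$ be any $k$-admissible set. By Lemma~\ref{P9.lem1}, $X_Z$ is a $k$-mutual visible set in $G$, so $|X_Z|\le \mu_k(G)$. Taking the maximum over all $k$-admissible $Z$ gives $\max\{|X_Z|\}\le \mu_k(G)$. This maximum is over a finite and nonempty family, since $Z=\emptyset$ is trivially $k$-admissible.

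\emph{Upper bound.} Let $X$ be a maximum $k$-mutual visible set, so $|X|=\mu_k(G)$, and let
\[
Z=(X\cap A)\cup\{\, b\in \mathcal{B}': X\cap (V(B_b)\setminus A)\neq\emptyset \,\}.
\]
By Lemma~\ref{P9.lem2}, $Z$ is $k$-admissible. I would then show $X\subseteq X_Z$. Take $x\in X$.
\begin{itemize}
\item If $x\in A$, then $x\in X\cap A=Z\cap A\subseteq X_Z$.
\item If $x\notin A$, then $x$ lies in a unique block $B_b$, so $x\in X\cap (V(B_b)\setminus A)$. This forces $b\in Z\cap\mathcal{B}'$, and hence $x\in V(B_b)\setminus A\subseteq X_Z$.
\end{itemize}
Therefore $\mu_k(G)=|X|\le |X_Z|\le \max\{|X_{Z'}| : Z' \text{ is } k\text{-admissible}\}$. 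Combining the two bounds gives the theorem.

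I do not expect a genuine obstacle, because the work is done in Lemmas~\ref{P9.lem1} and~\ref{P9.lem2}. The step needing care is the containment $X\subseteq X_Z$. It relies on every non-articulation vertex lying in exactly one block, so that the map $\varphi$ used in the lemmas is well defined. It is also worth noting that $X_Z$ may be strictly larger than $X$, since it includes all non-articulation vertices of each selected block; this is harmless, because Lemma~\ref{P9.lem1} guarantees that $X_Z$ is still $k$-mutual visible. I would also remark briefly on the degenerate case where $G$ is a single block (for example $K_n$). Then $A=\emptyset$, $H$ is a single vertex, and the formula gives $|V(G)|$, which is consistent with Theorem~\ref{P9.th3} since the diameter is at most $1$.
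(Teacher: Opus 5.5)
Your proposal is correct and follows the paper's proof essentially step for step. The lower bound comes from Lemma~\ref{P9.lem1}. The upper bound builds $Z$ from a maximum $k$-mutual visible set $X$, applies Lemma~\ref{P9.lem2}, and checks $X\subseteq X_Z$ using the fact that each non-articulation vertex lies in exactly one block.
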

\begin{proof}
Let $M=\max\{ |X_Z|: Z\subseteq V(H)\text{ is $k$-admissible} \}$.
First we show that $M\le \mu_k(G)$. Let $Z\subseteq V(H)$ be $k$-admissible. By Lemma~\ref{P9.lem1}, the set $X_Z$ is a mutual
$k$-visible set in $G$. Hence $|X_Z|\le \mu_k(G)$ for every $k$-admissible $Z$, and therefore $M\le \mu_k(G)$.

For the reverse inequality, let $X\subseteq V(G)$ be a  $k$-mutual visible set with $|X|=\mu_k(G)$. Define $
Z=(X\cap A)\cup\{\,b\in\mathcal{B}': X\cap (V(B_b)\setminus A)\neq\emptyset\,\}$.
By Lemma~\ref{P9.lem2}, the set $Z$ is $k$-admissible. Moreover, $X\subseteq X_Z$. Indeed, if $x\in X\cap A$, then $x\in Z\cap A\subseteq X_Z$.
If $x\in X\setminus A$, then $x\in V(B_b)\setminus A$ for a unique $b\in\mathcal{B}'$.
Since $X\cap (V(B_b)\setminus A)\neq\emptyset$, we have $b\in Z$, and hence $b\in Z\cap\mathcal{B}'$.
Therefore $x\in V(B_b)\setminus A\subseteq X_Z$.
Consequently, $\mu_k(G)=|X|\le |X_Z|\le M$,
and thus $\mu_k(G)\le M$. Combining the two inequalities yields $\mu_k(G)=M$, as required.
\end{proof}
\begin{coro}\label{P9.cor1}
Let $G$ be a block graph, and let $H$ be its block--cutpoint tree with bipartition
$V(H)=A\cup\mathcal{B}'$.
If $Z\subseteq V(H)$ is $0$-admissible, then
$|X_Z|\le |V(G)\setminus A|$.
\end{coro}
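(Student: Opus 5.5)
The plan is to compare the two sides of $|X_Z|\le |V(G)\setminus A|$ by a charging argument. The sets $V(B_b)\setminus A$, for $b\in\mathcal{B}'$, are pairwise disjoint, since a non-articulation vertex lies in a unique block. They are also all contained in $V(G)\setminus A$. Hence
\[
|X_Z| = \Bigl|\bigcup_{b\in Z\cap\mathcal{B}'}(V(B_b)\setminus A)\Bigr| + |Z\cap A| ,
\]
and the first term already counts only vertices of $V(G)\setminus A$ lying in $X_Z$. It therefore suffices to construct an injection $\psi$ from $Z\cap A$ into $(V(G)\setminus A)\setminus X_Z$, that is, into the non-articulation vertices not already used.

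\textbf{Construction of $\psi$.} Fix $a\in Z\cap A$ and consider the forest $H-a$. Since $a$ is an articulation vertex, it has degree at least $2$ in $H$, so $H-a$ has at least two components. By $0$-admissibility, $a$ is not an internal vertex of $P_H(\alpha,\beta)$ for any distinct $\alpha,\beta\in Z$. Consequently all elements of $Z\setminus\{a\}$ lie in a single component of $H-a$, and we can choose a component $C_a$ of $H-a$ with $C_a\cap Z=\emptyset$.

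Next we locate a leaf. The component $C_a$ is a subtree containing a block-vertex adjacent to $a$. Either $C_a$ is that single block-vertex, which is then a leaf of $H$, or $C_a$ has a leaf of $H$ other than its attachment point. Leaves of $H$ are block-vertices, because an articulation vertex lies in at least two blocks. So $C_a$ contains a leaf block-vertex $b_a$ of $H$.

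A leaf block contains exactly one articulation vertex and has at least two vertices. Hence we can pick $w_a\in V(B_{b_a})\setminus A$ and set $\psi(a)=w_a$. Since $b_a\notin Z$, the vertex $w_a$ lies in no $V(B_b)\setminus A$ with $b\in Z$, and $w_a\notin A$, so $w_a\notin X_Z$.

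\textbf{Injectivity.} This is the main step. Let $a\neq a'$ be in $Z\cap A$. It suffices to show that $C_a\cap C_{a'}=\emptyset$, since then $b_a\neq b_{a'}$, and distinct blocks have disjoint sets of non-articulation vertices.

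Note first that $a'\in Z$, so $a'\notin C_a$, and similarly $a\notin C_{a'}$. Suppose some $v\in C_a\cap C_{a'}$ exists. The tree path from $v$ to $a'$ must leave $C_a$, so it passes through $a$. Its initial segment from $v$ to $a$ is then the tree path $P_H(v,a)$. This segment must leave $C_{a'}$, so it passes through $a'$. Thus $a'$ appears strictly before $a$ on the path $P_H(v,a')$, which ends at $a'$. This is impossible, since a path visits each vertex at most once.

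Hence $\psi$ is injective, and
\[
|X_Z|\le \Bigl|\bigcup_{b\in Z\cap\mathcal{B}'}(V(B_b)\setminus A)\Bigr|+|\psi(Z\cap A)|\le |V(G)\setminus A|.
\]
If $A=\emptyset$, the claim is immediate because $X_Z\subseteq V(G)$.
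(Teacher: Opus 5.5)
Your proof is correct, and it takes a genuinely different route from the paper.

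\textbf{The paper's route.} The paper's argument is a short deduction from outside information. By Theorem~\ref{P9.th1}, which rests on Lemma~\ref{P9.lem1}, every $X_Z$ with $Z$ $0$-admissible is a mutual-visibility set, so $|X_Z|\le\mu_0(G)$. The paper then imports Di Stefano's theorem \cite{Stefano} that $\mu(G)=|V(G)\setminus A|$ for block graphs. It also remarks that $Z=\mathcal{B}'$ attains the bound, which the inequality itself does not need.

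\textbf{Your route.} You never leave the block--cutpoint tree.
\begin{itemize}
\item $0$-admissibility forces $Z\setminus\{a\}$ into a single branch at each $a\in Z\cap A$.
\item Hence some branch $C_a$ avoids $Z$. That branch contains a leaf block, whose non-articulation vertices lie outside $X_Z$.
\item The branches $C_a$ for distinct $a$ are disjoint, and this gives the injection.
\end{itemize}
Each step checks out: the existence of the $Z$-free component, the leaf lying inside $C_a$, and the path-revisiting argument for $C_a\cap C_{a'}=\emptyset$.

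\textbf{Comparison.} Your argument is longer, but it is self-contained. It uses neither the geodesic analysis behind Lemma~\ref{P9.lem1} nor the external citation. It also shows concretely which non-articulation vertices pay for each selected cut vertex.

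Your injection in fact reproves the upper bound $\mu(G)\le|V(G)\setminus A|$ of Di Stefano's result. Combine it with Lemma~\ref{P9.lem2} and the containment $X\subseteq X_Z$ from the proof of Theorem~\ref{P9.th1}. The matching lower bound comes from $Z=\mathcal{B}'$ via Lemma~\ref{P9.lem1}. So your approach recovers the cited theorem from the admissibility framework, whereas the paper's corollary depends on that theorem.

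One convention you rely on should be stated: every block, including a bridge, has at least two vertices, so a leaf block contains a non-articulation vertex. This is standard and is implicit in the paper's setup as well.
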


\begin{proof}
By Theorem~\ref{P9.th1},
\begin{equation}\label{P9.eq1}
\mu(G)=\mu_0(G)=\max\{\,|X_Z| : Z\subseteq V(H)\text{ is $0$-admissible}\,\}.
\end{equation}
Di Stefano~\cite{Stefano} proved that $\mu(G)=|V(G)\setminus A|$.
Since $\mathcal{B}'\cap A=\emptyset$, the set $\mathcal{B}'$ is $0$-admissible. Moreover,
\[
X_{\mathcal{B}'}=
\Big(\bigcup_{b\in \mathcal{B}'} (V(B_b)\setminus A)\Big)\cup(\mathcal{B}'\cap A)
=\bigcup_{b\in \mathcal{B}'} (V(B_b)\setminus A).
\]
In a block graph, every vertex is either an articulation vertex or a non-articulation vertex belonging to a unique block.
Consequently, the sets $V(B_b)\setminus A$ are pairwise disjoint, and their union equals $V(G)\setminus A$.
Thus $
X_{\mathcal{B}'}=V(G)\setminus A $, and 
$|X_{\mathcal{B}'}|=|V(G)\setminus A|=\mu(G)$. Therefore, the maximum in~\eqref{P9.eq1} is attained by the $0$-admissible set $Z=\mathcal{B}'$.
In particular, for every $0$-admissible set $Z\subseteq V(H)$ we have $|X_Z|\le |X_{\mathcal{B}'}|=|V(G)\setminus A|$.
\end{proof}
\section{Computational Complexity}
The following problem, which asks to test whether a given set of points is a $k$-mutual
visibility set, can be solved in polynomial time.
\begin{defn}{K-MUTUAL VISIBILITY TEST:}
\leavevmode\par
 Instance: A graph $G=(V,E)$, a set of points $S\subseteq V$, and an integer $k\ge 0$.
 
 Question: Is $S$ a $k$-mutual visibility set of $G ?$ 
\end{defn}
The solution is provided by means of Algorithm \textsc{kMV}, which in turn uses
Procedure \textsc{BFS\_kMV} as a sub-routine. Procedure \textsc{BFS\_kMV} and Algorithm \textsc{kMV} are given below.\\
{
\renewcommand{\thealgocf}{}
\begin{algorithm}[H]
\caption{\textsc{kMV}}
\KwIn{A graph $G=(V,E)$, a set of points $S\subseteq V$, an integer $k\ge 0$}
\KwOut{True if $S$ is a $k$-mutual visibility set, False otherwise}

\If{points in $S$ are in different connected components of $G$}{
\Return False\;
}
Let $H$ be the connected component of $G$ containing $S$\;

\ForEach{$v\in S$}{
$DP\leftarrow \textsc{BFS\_kMV}(H,S,v)$\;
\ForEach{$q\in S\setminus\{v\}$}{
\If{$DP[q]>k+1$}{
\Return False\;
}
}
}
\Return True\;
\end{algorithm}
}  
\begin{theorem}\label{P9.th5}
Let $G=(V,E)$ be a graph, let $S\subseteq V$, and let $k\ge 0$.
Algorithm \textsc{kMV} decides \textsc{$k$-Mutual Visibility Test} in time $O\bigl(|S|(|V|+|E|)+|S|^2\bigr)$, and hence in polynomial time.
\end{theorem}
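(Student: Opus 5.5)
The procedure \textsc{BFS\_kMV} is not shown before the statement, so I will first fix its intended specification and prove it. For a source $v\in S$ it should return an array $DP$ in which $DP[q]$ is the minimum, over all shortest $(v,q)$-paths, of the number of vertices of $S$ on the path \emph{excluding} the source $v$. For $q\in S\setminus\{v\}$ the endpoint $q$ itself is then counted, so $DP[q]-1$ is the minimum number of internal vertices of $S$ over all shortest $(v,q)$-paths. Consequently, $v$ and $q$ are $(S,k)$-visible if and only if $DP[q]\le k+1$, and this is exactly the test the algorithm performs.

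To compute $DP$, I will run an ordinary breadth-first search from $v$ in $H$. This produces the distances $d(v,\cdot)$ and the layers $L_0=\{v\},L_1,L_2,\dots$. Every edge $uw$ with $d(v,w)=d(v,u)+1$ is an arc of the shortest-path DAG. I set $DP[v]=0$ and, for $w\neq v$, take
\[
DP[w]=[w\in S]+\min\{DP[u] : uw\in E,\ d(v,u)=d(v,w)-1\}.
\]
The values are filled in layer by layer, which is compatible with the BFS order. The correctness claim is that $DP[w]$ equals the minimum number of $S$-vertices other than $v$ on a shortest $(v,w)$-path. I will prove this by induction on $d(v,w)$. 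The key fact is that a path from $v$ to $w$ is a shortest path if and only if it consists of a shortest $(v,u)$-path followed by the edge $uw$, where $u$ is a predecessor of $w$ in the next-lower layer. The count of $S$-vertices is additive along such a path, so the minimum decomposes exactly as in the recurrence. This induction is the main step of the argument. The only care needed is in the bookkeeping of endpoints: $v$ is excluded from the count and $q$ is included, which is what produces the threshold $k+1$ rather than $k$.

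Next come the connectivity check and overall correctness. If two vertices of $S$ lie in different components, then no path joins them at all, so $S$ is not a $k$-mutual visibility set and returning False is correct. Otherwise every shortest path between vertices of $S$ lies in their common component $H$, so working in $H$ loses nothing. The algorithm returns True exactly when $DP[q]\le k+1$ for every ordered pair $v\ne q$ in $S$. By the claim above, this holds if and only if every pair is $(S,k)$-visible, which is the definition of a $k$-mutual visibility set. Degenerate cases need a brief remark: when $|S|\le1$ the condition holds vacuously, and a convention such as rejecting the empty set is harmless.

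Finally, the running time. The connectivity test is a single BFS or a union-find pass, costing $O(|V|+|E|)$. Each call to \textsc{BFS\_kMV} performs one BFS, and the DP update relaxes each edge once (twice in the undirected case), so each call costs $O(|V|+|E|)$. Testing membership in $S$ takes $O(1)$ after precomputing a Boolean array. There are $|S|$ calls, and the inner loops perform $|S|(|S|-1)$ comparisons in total. The overall bound is therefore $O\bigl(|S|(|V|+|E|)+|S|^2\bigr)$, which is polynomial.
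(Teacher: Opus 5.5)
Your proposal is correct, and its complexity analysis matches the paper's. Both use one BFS from each $v\in S$, followed by a single level-by-level relaxation pass over the edges $uw$ with $\mathrm{dist}[w]=\mathrm{dist}[u]+1$. That gives $O(|V|+|E|)$ per call, plus $O(|S|)$ threshold checks per source and one preliminary connectivity BFS.

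Your reconstruction of \textsc{BFS\_kMV}, which the paper displays only after the theorem, also matches the paper's procedure exactly:
\begin{itemize}
\item $\mathrm{cnt}[v]=0$;
\item each $w\neq v$ receives $[w\in S]$ plus the minimum of $\mathrm{cnt}[u]$ over its predecessors $u$ in the previous BFS layer;
\item $S$ is rejected as soon as some $DP[q]>k+1$.
\end{itemize}

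The difference is that the paper's proof consists of the running-time count alone. It never shows three things: that $DP[q]$ is the minimum number of vertices of $S\setminus\{v\}$ on a shortest $(v,q)$-path; why the threshold is $k+1$ rather than $k$; and why rejecting when $S$ meets two components is right. Your induction on $d(v,w)$ over the shortest-path DAG supplies exactly the correctness half of ``decides'' that the paper leaves implicit. Its endpoint bookkeeping ($v$ excluded, $q$ counted, so the internal count is $DP[q]-1$) is what justifies the $k+1$ threshold. Your $O(1)$ membership test via a Boolean array also makes explicit what the paper's ``constant-time updates'' silently assume.

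One small correction concerns the empty set. It is vacuously a $k$-mutual visibility set, and the algorithm as written accepts it, since the loop over $S$ is empty. So ``rejecting the empty set'' would not be a harmless convention but a wrong answer on that instance. Just observe that the algorithm returns True there.
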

\begin{proof}
We first estimate the running time of the subroutine \textsc{BFS\_kMV} on a connected graph $H=(V(H),E(H))$ with input $(H,S,v)$, where $S\subseteq V(H)$ and $v\in S$.

The initialization of the arrays $\mathrm{dist}[\cdot]$ and $\mathrm{cnt}[\cdot]$ (Lines~1--4) requires $O(|V(H)|)$ time, while initializing $DP[s]$ for $s\in S$ (Lines~5--7) requires $O(|S|)$ time.
The breadth-first search phase (Lines~10--15) visits each vertex at most once and scans each adjacency list once; hence every edge of $H$ is examined at most twice.
Therefore this phase runs in $O(|V(H)|+|E(H)|)$ time.

After the distances have been computed, the vertices are naturally partitioned into BFS levels according to their distance from $v$.
Thus they can be processed in nondecreasing order of $\mathrm{dist}$ without sorting.
In the second phase (Lines~17--24), for each vertex $u$ the procedure scans all neighbors $w\in N_H(u)$ and performs constant-time updates whenever $\mathrm{dist}[w]=\mathrm{dist}[u]+1$.
Consequently, each edge is examined a constant number of times, contributing $O(|E(H)|)$ time.
Finally, copying $\mathrm{cnt}[s]$ to $DP[s]$ (Lines~25--26) requires $O(|S|)$ time.
Since each vertex is processed a constant number of times and each edge
is scanned a constant number of times in both phases,
the total running time of \textsc{BFS\_kMV} is $O(|V(H)|+|E(H)|)$.

We now analyze Algorithm \textsc{kMV}.
The preliminary connectivity test (Line~1) can be performed by a single BFS in $O(|V|+|E|)$ time.
For each $v\in S$ (outer loop), the algorithm invokes $\textsc{BFS\_kMV}(H,S,v)$ once, at cost $O(|V|+|E|)$, and then verifies the condition $DP[q]\le k+1$ for all $q\in S\setminus\{v\}$ (inner loop), which requires $O(|S|)$ time per $v$.
Therefore, the total running time of \textsc{kMV} is
$O(|V|+|E|) + \sum_{v\in S}\bigl(O(|V|+|E|)+O(|S|)\bigr)
= O(|V|+|E|) + |S|\bigl(O(|V|+|E|)+O(|S|)\bigr)= O\bigl(|S|(|V|+|E|)+|S|^2\bigr)$.
Since $|S|\le |V|$, this bound is polynomial in the input size.
Hence \textsc{kMV} solves \textsc{$k$-Mutual Visibility Test} in polynomial time.
\end{proof}
{
\renewcommand{\thealgocf}{}   
\renewcommand{\algorithmcfname}{Procedure}
\begin{algorithm}
\caption{\textsc{BFS\_kMV}}
\KwIn{A connected graph $G=(V,E)$, a set $S\subseteq V$, a vertex $v\in V$}
\KwOut{For each $s\in S$, the minimum number of vertices of $S\setminus\{v\}$
on a shortest $(v,s)$-path}

\ForEach{$u\in V$}{
$\mathrm{dist}[u]\leftarrow \infty$\;
$\mathrm{cnt}[u]\leftarrow \infty$\;
}
\ForEach{$s\in S$}{
$DP[s]\leftarrow \infty$\;
}
$\mathrm{dist}[v]\leftarrow 0$\;
$\mathrm{cnt}[v]\leftarrow 0$\;
Let $Q$ be a queue\;
$Q.\mathrm{enqueue}(v)$\;

\While{$Q$ is not empty}{
$u\leftarrow Q.\mathrm{dequeue}()$\;
\ForEach{$w\in N_G(u)$}{
\If{$\mathrm{dist}[w]=\infty$}{
$\mathrm{dist}[w]\leftarrow \mathrm{dist}[u]+1$\;
$Q.\mathrm{enqueue}(w)$\;
}
}
}

Process vertices $u\in V$ in nondecreasing order of $\mathrm{dist}[u]$\;
\ForEach{$u$ in this order}{
\If{$\mathrm{cnt}[u]<\infty$}{
\ForEach{$w\in N_G(u)$}{
\If{$\mathrm{dist}[w]=\mathrm{dist}[u]+1$}{
\If{$w\in S$ and $w\neq v$}{
$\mathrm{cnt}[w]\leftarrow \min\{\mathrm{cnt}[w],\mathrm{cnt}[u]+1\}$\;
}
\Else{
$\mathrm{cnt}[w]\leftarrow \min\{\mathrm{cnt}[w],\mathrm{cnt}[u]\}$\;
}
}
}
}
}

\ForEach{$s\in S$}{
$DP[s]\leftarrow \mathrm{cnt}[s]$\;
}
\Return $DP$\;
 \end{algorithm}
}
\begin{theorem}\label{P9.th7}
Let $G$ be a graph of order $n$, and let $r\ge 1$.
Let $G'$ be obtained from $G$ by attaching a path of length $r-1$
to each vertex of $G$ by an edge. Then, for $\varepsilon\in\{0,1\}$,
$\mu_{2r+\varepsilon}(G')=rn+\mu_\varepsilon(G)
$.
\end{theorem}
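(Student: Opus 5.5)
The plan is to prove the two inequalities separately, imitating the proof of Theorem~\ref{P9.th8}. For each $v\in V(G)$, write $p^v_1,\dots,p^v_r$ for the vertices of the path attached at $v$, with $p^v_1$ adjacent to $v$ and $p^v_r$ the leaf. Put $C_v=\{v,p^v_1,\dots,p^v_r\}$. Then $|V(G')|=(r+1)n$.

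The structural fact used throughout is this. For $a\in C_u$ and $b\in C_v$ with $u\neq v$, every shortest $(a,b)$-path in $G'$ runs down the attached path from $a$ to $u$, then follows a shortest $(u,v)$-path of $G$, then runs up from $v$ to $b$. In particular, $d_{G'}(a,b)=d_{C_u}(a,u)+d_G(u,v)+d_{C_v}(v,b)$. Also, $G$ is a convex subgraph of $G'$, and each $C_v$ is a path whose internal structure is forced.

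\emph{Upper bound.} Let $S$ be a maximum $(2r+\varepsilon)$-mutual visible set of $G'$, and let $D=\{v\in V(G): C_v\subseteq S\}$.
\begin{itemize}
\item Counting: every $C_v$ with $v\notin D$ misses at least one vertex of $S$, so $|S|\le (r+1)n-(n-|D|)=rn+|D|$.
\item $D$ is $\varepsilon$-mutual visible in $G$. Take $u,v\in D$ and the leaves $p^u_r,p^v_r\in S$. Every shortest $(p^u_r,p^v_r)$-path has as forced internal vertices of $S$ the $2(r-1)$ non-leaf attached vertices together with $u$ and $v$, i.e. exactly $2r$ of them.
\item Hence the chosen witnessing path contains a shortest $(u,v)$-path $P$ of $G$ with at most $\varepsilon$ internal vertices in $S$, and therefore at most $\varepsilon$ in $D$.
\item So $|D|\le\mu_\varepsilon(G)$, which gives $\mu_{2r+\varepsilon}(G')\le rn+\mu_\varepsilon(G)$.
\end{itemize}
This is the same scheme as the converse half of Theorem~\ref{P9.th8}.

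\emph{Lower bound.} Take a maximum $\varepsilon$-mutual visible set $X$ of $G$ and set $S=X\cup\bigcup_v\{p^v_1,\dots,p^v_r\}$, so $|S|=rn+\mu_\varepsilon(G)$.
\begin{itemize}
\item Pairs inside one $C_v$ are joined by a subpath with at most $r-1$ internal vertices, which is fine.
\item For $a\in C_u$, $b\in C_v$ with $u,v\in X$, use the $\varepsilon$-visibility of $u,v$ in $G$. The count is at most $(r-1)+(r-1)+2+\varepsilon=2r+\varepsilon$.
\item If exactly one of $u,v$ lies in $X$, or both are vertices of $G$ with $a=u$, the bookkeeping gains slack. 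For example, if $a=u\in X$ then $u$ is an endpoint rather than an internal vertex.
\end{itemize}

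\emph{Main obstacle.} The hard case is $u\notin X$ or $v\notin X$ with $a,b$ leaves. Then the $G$-part of the path is an arbitrary shortest $(u,v)$-path, and $X$ gives no control on how many of its internal vertices lie in $X$. The budget there is only $2+\varepsilon$ (respectively $1+\varepsilon$), since $u$ and/or $v$ no longer count.

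I would first try to exploit the freedom in choosing $X$. Among maximum $\varepsilon$-mutual visible sets, pick one that is maximal or ``closed'' in a suitable sense, so that every vertex $u\notin X$ is $(X,1+\varepsilon)$-visible from all of $X$ and from the other non-members. This is an outer/dual-type visibility property in the sense of $\mu^{\mathrm{out}}_k$ and $\mu^{\mathrm{dual}}_k$. If that fails, I would modify $S$ on the columns $C_u$ with $u\notin X$, keeping $r$ vertices per column but dropping a vertex that lies on the forced segment, to recover the missing units.

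I expect this step to need the most care. It may require an extra hypothesis on $G$, or it may show that the lower bound must be argued via a different extremal set than the natural one used for the upper bound.
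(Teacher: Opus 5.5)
Your upper bound is correct and is the paper's argument: the set $D$, the count $|S|\le rn+|D|$, and the $2r$ forced vertices of $S$ on a leaf-to-leaf geodesic. Your candidate set $S=X\cup\bigcup_v\{p^v_1,\dots,p^v_r\}$ is also exactly the paper's.

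The genuine gap is the lower bound. You leave unresolved the pairs $a\in C_u$, $b\in C_v$ with $u\notin X$ or $v\notin X$, on the grounds that $X$ gives no control over the $G$-segment. That premise is false, and none of your proposed detours (a special maximum $X$, altering $S$ on some columns, an extra hypothesis on $G$) is needed.

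The missing idea is a splicing lemma. If $X$ is $\varepsilon$-mutual visible in $G$, then for \emph{every} pair $u,w\in V(G)$, whether in $X$ or not, some shortest $(u,w)$-path of $G$ contains at most $\varepsilon+2$ vertices of $X$ in total. To see this, take any shortest $(u,w)$-path $P$; if it meets $X$ at most twice, you are done. Otherwise let $x$ and $y$ be the first and last vertices of $X$ on $P$. Replace the segment $P[x,y]$ by a shortest $(x,y)$-path $Q$ with at most $\varepsilon$ internal vertices in $X$. The new $(u,w)$-walk has length $d_G(u,x)+d_G(x,y)+d_G(y,w)=d_G(u,w)$, so it is a shortest path. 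Since $P$ has no vertex of $X$ before $x$ or after $y$, its vertices in $X$ are $x$, $y$, and at most $\varepsilon$ internal vertices of $Q$.

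With this lemma your count closes uniformly. Take $a=p^u_i$ and $b=p^v_j$ with $u\ne v$ and $i,j\ge 1$, and use the $G$-segment given by the lemma. The attached paths contribute $(i-1)+(j-1)\le 2(r-1)$ internal vertices of $S$, and the segment contributes at most $\varepsilon+2$, for a total of at most $2r+\varepsilon$. If $a$ or $b$ is itself a vertex of $G$ (hence of $X$), it is an endpoint rather than an internal vertex, so there is even more room.

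This is exactly the content of the paper's one-line assertion that ``the segment in $G$ may be chosen to contain at most $\varepsilon+2$ vertices of $X$''. The paper does not spell out the splice, but the splice is what makes that sentence true. Note also that neither half of the argument uses $\varepsilon\le 1$. So $\mu_{2r+\varepsilon}(G')=rn+\mu_\varepsilon(G)$ holds for every graph $G$ and every $\varepsilon\ge 0$, with no extra hypothesis.
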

\begin{proof}
For each vertex $v\in V(G)$, let $P_v=(v=v_0,v_1,\ldots,v_r)$ denote the path attached at $v$, and let $L=\{v_i:v\in V(G),\,1\le i\le r\}$ be the set of all newly added vertices. Then $|L|=rn$.

Let $X$ be a maximum $\varepsilon$-mutual visible set of $G$, where $\varepsilon\in\{0,1\}$. We claim that $S=L\cup X$ is a $(2r+\varepsilon)$-mutual visible set of $G'$. Indeed, if two vertices of $S$ lie on the same attached path $P_v$, then the unique shortest path between them contains at most $r-1$ internal vertices of $S$. Otherwise, a shortest path between them consists of a path in $G$ together with portions of at most two attached paths ($P_v , P_u$). Since $X$ is $\varepsilon$-mutual visible in $G$, the segment in $G$ may be chosen to contain at most $\varepsilon+2$ vertices of $X$, while the two attached path portions contribute at most $r-1$ internal vertices each. Hence the path contains at most $2(r-1)+\varepsilon+2= 2r+\varepsilon$ internal vertices of $S$. Therefore,
$\mu_{2r+\varepsilon}(G')\ge |S|=rn+\mu_\varepsilon(G)$.

For the reverse inequality, let $S$ be a maximum $(2r+\varepsilon)$-mutual visible set of $G'$. For each $v\in V(G)$, define $B_v=\{v_0,v_1,\ldots,v_r\}$ and let $D=\{v\in V(G):B_v\subseteq S\}$. Then $\{B_v : v\in V(G)\}$ is a partition of $V(G')$.  Since $|S\cap B_v|=r+1$ when $v\in D$ and $|S\cap B_v|\le r$ otherwise, we have
\begin{equation}\label{P9.eq2}
|S|
=\sum_{v\in V(G)}|S\cap B_v|
\le (r+1)|D|+r(n-|D|)
=rn+|D|.
\end{equation}

We claim that $D$ is a $\varepsilon$-mutual visible set of $G$. Let $u,v\in D$ be distinct. Since $u_r,v_r\in S$ and $S$ is $(2r+\varepsilon)$-mutual visible set, there exists a shortest $(u_r,v_r)$-path containing at most $2r+\varepsilon$ internal vertices from $S$. Every shortest $(u_r,v_r)$-path has the form
$(u_r u_{r-1}\ldots u_1 P v_1\ldots v_{r-1}v_r)$,
where $P$ is a shortest $(u,v)$-path in $G$. The vertices $u_{r-1},\ldots,u_1,u,v,v_1,\ldots,v_{r-1}$ all belong to $S$ and already contribute exactly $2r$ internal vertices from $S$, so the internal vertices of $P$ contain at most $\varepsilon$ vertices of $S$, and hence at most $\varepsilon$ vertices of $D$. Thus $u$ and $v$ are $(D,\varepsilon)$-visible in $G$, proving that $D$ is a $\varepsilon$-mutual visible set. Consequently, $|D|\le\mu_\varepsilon(G)$.

Combining this with \eqref{P9.eq2}, we obtain $|S|\le rn+\mu_\varepsilon(G)$. Since $S$ is maximum, $\mu_{2r+\varepsilon}(G')\le rn+\mu_\varepsilon(G)$. Together with the lower bound, this yields
$\mu_{2r+\varepsilon}(G')=rn+\mu_\varepsilon(G)$.
\end{proof}
\begin{defn}$k$-MUTUAL VISIBILITY problem:
\leavevmode\par
\textbf{Instance:} A graph $G$, a nonnegative integer $k$, and a positive integer $Q\le |V(G)|$.

\textbf{Question:} Is $\mu_k(G)\ge Q ?$
\end{defn}
The following theorem is proved by a polynomial-time reduction from the classical NP-complete problem \textsc{3-SAT} \cite{Karp1972}.
\begin{theorem}\label{P9.th9}
The decision problem \textsc{$1$-Mutual Visibility} is NP-hard.
\end{theorem}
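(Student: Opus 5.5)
We plan a polynomial-time reduction from \textsc{3-SAT}, in the spirit of the reduction used by Di Stefano~\cite{Stefano} for classical mutual visibility, adapted so that exactly one internal obstruction per geodesic is tolerated. Given a formula $\Phi$ with variables $x_1,\dots,x_p$ and clauses $c_1,\dots,c_q$, we will build a graph $G_\Phi$ and an integer $Q$ such that $\Phi$ is satisfiable if and only if $\mu_1(G_\Phi)\ge Q$. Membership of \textsc{1-Mutual Visibility} in NP is not needed here; it would in any case follow from Theorem~\ref{P9.th5}, since a candidate set can be verified by Algorithm \textsc{kMV}.

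\textbf{Construction.} First, for each variable $x_i$ we add a variable gadget containing two literal vertices $t_i$ and $f_i$. We make them adjacent to a common hub $h_i$ and pad the gadget with a clique of ``forced'' vertices, so that the maximum $1$-mutual visible sets restricted to the gadget contain exactly one of $t_i,f_i$. Second, for each clause $c_j$ we add a clause vertex $w_j$, joined by paths of a fixed common length $L$ to the literal vertices occurring in $c_j$. The length $L$ is chosen so that every geodesic from $w_j$ to a distant reference set $R$ (a large clique attached to all hubs) must pass through one of these three literal vertices. Third, we choose $Q$ to be the number of forced vertices, plus $p$ (one literal per variable), plus $q$ (all clause vertices), plus $|R|$.

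\textbf{Correctness.} For the direction from a satisfying assignment to a large set, we put into the set the forced vertices, $R$, all $w_j$, and the true literal of each variable. For each pair in this set we exhibit an explicit geodesic with at most one selected internal vertex. The key case is $w_j$ versus $R$, which is routed through a literal of $c_j$ that is true; this literal is selected, but it is then the only selected internal vertex, so the count is at most $1$. For the converse direction, we would argue in three steps:
\begin{enumerate}
\item counting shows that any $1$-mutual visible set of size $Q$ must contain all forced vertices, all of $R$, and all clause vertices;
\item such a set contains exactly one literal per variable, because taking both $t_i$ and $f_i$ creates a pair whose geodesics all carry at least two selected internal vertices (through the hub $h_i$ and the other literal);
\item each $w_j$ sees $R$ only through a geodesic containing a selected literal adjacent to it, and at most one other selected vertex.
\end{enumerate}
From these we read off a truth assignment: the selected literal of each variable is set true, and step~3 shows every clause then contains a true literal.

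The main obstacle is calibrating the gadget distances so that the tolerance $k=1$ is \emph{exactly} the right slack. Every geodesic that must be blocked has to contain at least two selected internal vertices, while every geodesic meant to be usable contains at most one. The hubs $h_i$, if selected, will interfere with this count, so we expect to have to decide whether hubs are selected; our first attempt is to keep them unselected and to compensate $Q$ accordingly. As a fallback, we would reduce instead from classical \textsc{Mutual Visibility} (known NP-hard): find a transformation $G\mapsto G^\ast$ with $\mu_1(G^\ast)=f(|V(G)|)+\mu_0(G)$, analogous to Theorems~\ref{P9.th7} and~\ref{P9.th8} but shifting $k$ by one. Attaching a single pendant to each vertex and using the bound~\eqref{P9.eq2} is the natural thing to try first there. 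However, the lower-bound construction with $k=1$ needs care, since two pendants already incur two internal vertices.
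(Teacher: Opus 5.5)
There is a genuine gap in your clause check (step 3), which cannot work with the encoding you chose. The point is monotonicity. For a fixed pair, the number of selected internal vertices on a geodesic can only grow when vertices are added to $S$. So $(S,1)$-visibility can force vertices \emph{out} of $S$, but never \emph{into} it.

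Concretely, in your graph (with hubs and the interiors of the connecting paths unselected, as in your intended sets), every geodesic from $w_j$ to $R$ consists of the connecting path of length $L$, a literal vertex of $c_j$, its hub, and a vertex of $R$. If some literal vertex of $c_j$ is unselected, that route has no selected internal vertex. If all three are selected, it still has only one. Hence $w_j$ and $R$ are $(S,1)$-visible whatever literals are chosen, step 3 yields no constraint, and the direction $\mu_1(G_\Phi)\ge Q\Rightarrow \Phi$ satisfiable has nothing to read an assignment from.

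The missing idea is to reverse the polarity. A true literal must correspond to an \emph{omitted} literal vertex. Moreover, every geodesic from the clause vertex to its target must, right after the literal vertex, pass through a vertex forced into every large set, so that this vertex alone exhausts the budget of one. This is the paper's mechanism. All literal vertices are adjacent to $c$, and the counting argument forces $c$, $w_2$ and all right vertices $b_i,\bar b_i$ into $S$. Every geodesic from a selected $v_j$ to the selected vertex of $\{z,z_1\}$ either starts with $v_j,w_2$ and a right vertex (two selected internal vertices), or passes through a literal vertex of $C_j$ followed by $c$ or a right vertex. So some $v(\ell_{jt})\notin S$, and one sets $\alpha(x_i)=\mathrm{true}$ iff $u_i\notin S$.

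The other steps are not established either. In step 2, with $h_i$ unselected, $t_i$ and $f_i$ see each other along $(t_i,h_i,f_i)$, and you do not exhibit a pair all of whose geodesics contain both literals. In the paper, ``exactly one literal vertex omitted per variable'' comes from several ingredients together:
\begin{itemize}
\item convex gadgets $T_i$ with $\mu_1(T_i)=5$, attained only by $V(T_i)\setminus\{u_i\}$ and $V(T_i)\setminus\{\bar u_i\}$;
\item the global bound $\mu_1(G_\Phi)\le 5p+m+5$, obtained from Proposition~\ref{P9.prop5} on a convex partition of $V(G_\Phi)$;
\item a long case analysis showing that any $1$-mutual visible set of size $5p+m+4$ meets each $T_i$ in five vertices, omits exactly one clause vertex, and contains $w_1,w_2,c$.
\end{itemize}
Your step 1 has no such upper bound behind it, so nothing prevents a set of size $Q$ from using hubs or interior vertices of the length-$L$ paths in place of the vertices you count.

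Finally, the fallback fails for the reason you suspected. Attaching one pendant $v'$ to each vertex is Theorem~\ref{P9.th7} with $r=1$, which shifts the index by $2$, not $1$. For $k=1$, if $u,u',v,v'$ are all selected, every $(u',v')$-geodesic has the two selected internal vertices $u$ and $v$. Hence $|D|\le 1$ in \eqref{P9.eq2}, so $\mu_1(G^\ast)\le n+1$ for every $G$, which carries no information about $\mu(G)$.
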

\begin{proof}
We prove NP-hardness by giving a polynomial-time reduction from
\textsc{3-SAT} to \textsc{$1$-Mutual Visibility}. Let $\Phi=C_1\wedge C_2\wedge\cdots\wedge C_m$ be a $3$-CNF formula over the variables $x_1,x_2,\ldots,x_p, p \geq 2$. We construct, in polynomial time, a graph $G_\Phi=(V, E)$ and an integer $K=5p+m+4$ such that $\Phi$ is satisfiable if and only if $\mu_1(G_\Phi)\ge K$.

For each variable $x_i$, introduce a variable gadget $T_i$ with vertex set
$V(T_i)={a_i,\bar a_i,u_i,\bar u_i,b_i,\bar b_i}$ and edge set
${a_i\bar a_i,a_i u_i,\bar a_i u_i,u_i\bar u_i,\bar u_i b_i,\bar u_i\bar b_i,b_i\bar b_i}$.
We refer to $a_i$ and $\bar a_i$ as the left vertices of $T_i$, to $b_i$ and $\bar b_i$ as the right vertices of $T_i$, and to $u_i$ and $\bar u_i$ as the literal vertices of $T_i$.

For a literal $\ell$, define
\[
v(\ell)=
\begin{cases}
u_i, & \text{if } \ell=x_i,\\
\bar u_i, & \text{if } \ell=\bar x_i.
\end{cases}
\]

For each clause $C_j=(\ell_{j1}\vee \ell_{j2}\vee \ell_{j3})$, introduce a new vertex $v_j$, and add the edges
$v_jv(\ell_{jt}), \ t=1,2,3$.
Moreover, introduce two vertices $w_1,w_2$ and add the edges $v_jw_1, v_jw_2$ for every $j\in\{1,2,\ldots,m\}$. Also add the edges $
a_iw_1, \bar a_iw_1, b_iw_2, \bar b_iw_2$ for every $i\in\{1,2,\ldots,p\}$.

Finally, introduce five vertices
$y,y_1,c,z,z_1$. For every $i\in\{1,2,\ldots,p\}$, add the edges $
a_iy, \bar a_iy,$
$ u_ic, \bar u_ic, b_iz, \bar b_iz$. Also add edges
$y_1y, yc, cz, zz_1$. Note that the maximum
$1$-mutual visibility sets of $T_i$ have cardinality $5$, and the only such
sets are $V(T_i)\setminus \{u_i\}
\ \text{and}\
V(T_i)\setminus \{\bar u_i\}$.

\begin{figure}
    \centering
\begin{tikzpicture}[
    scale=1, rotate=0,
    every node/.style={font=\small},
    selected/.style={circle,draw=black,fill=red!85,minimum size=2mm,inner sep=0pt},
    omitted/.style={circle,draw=black,fill=white,minimum size=2mm,inner sep=0pt},
    base/.style={draw=black,line width=0.8pt},
    aux/.style={draw=gray!95,line width=0.7pt},
    darkaux/.style={draw=black!85,line width=0.8pt},
    literal/.style={draw=gray!55,line width=0.7pt}
]
\newcommand{\varGadget}[5]{
\begin{scope}[shift={(#2,0)}]
    \node[selected,label=left:{$a_{#3}$}]       (#1-a)    at (-1.35, 0.75) {};
    \node[selected,label=left:{$\bar a_{#3}$}]  (#1-abar) at (-1.35,-0.75) {};

    \node[#4,label=above:{$u_{#3}$}]            (#1-u)    at (-0.35,0) {};
    \node[#5,label=above:{$\bar u_{#3}$}]       (#1-ub)   at ( 0.55,0) {};

    \node[selected,label=right:{$b_{#3}$}]      (#1-b)    at ( 1.55, 0.75) {};
    \node[selected,label=right:{$\bar b_{#3}$}] (#1-bbar) at ( 1.55,-0.75) {};

    \draw[base] (#1-a)--(#1-abar);
    \draw[base] (#1-a)--(#1-u);
    \draw[base] (#1-abar)--(#1-u);
    \draw[base] (#1-u)--(#1-ub);
    \draw[base] (#1-ub)--(#1-b);
    \draw[base] (#1-ub)--(#1-bbar);
    \draw[base] (#1-b)--(#1-bbar);
\end{scope}
}

\varGadget{Xone}{-6.2}{1}{omitted}{selected}
\varGadget{Xtwo}{-2.1}{2}{selected}{omitted}
\varGadget{Xp}{5.6}{p}{selected}{omitted}

\node at (1.75,0) {\Large $\cdots$};

\node[selected,label=above:{$v_1$}] (vone) at (-5.1,2.35) {};
\node[omitted,label=above:{$v_q$}]  (vq)   at ( 0.0,2.35) {};
\node[selected,label=above:{$v_m$}] (vm)   at ( 5.1,2.35) {};

\node at (-2.6,2.35) {\Large $\cdots$};
\node at ( 2.6,2.35) {\Large $\cdots$};

\node[selected,label=above:{$w_1$}] (w1) at (-2.7,4.0) {};
\node[selected,label=above:{$w_2$}] (w2) at ( 2.7,4.0) {};

\node[omitted,label=below:{$y$}]    (y)  at (-2.7,-3.0) {};
\node[selected,label=below:{$y_1$}] (y1) at (-4.1,-3.0) {};
\node[selected,label=below:{$c$}]   (c)  at ( 0.0,-3.0) {};
\node[omitted,label=below:{$z$}]    (z)  at ( 2.7,-3.0) {};
\node[selected,label=below:{$z_1$}] (z1) at ( 4.1,-3.0) {};

\draw[base] (y1)--(y)--(c)--(z)--(z1);

\foreach \v in {vone,vq,vm}{
    \draw[cyan] (\v)--(w1);
    \draw[aux] (\v)--(w2);
}

\foreach \X in {Xone,Xtwo,Xp}{
    \draw[cyan] (w1)--(\X-a);
    \draw[cyan] (w1)--(\X-abar);

    \draw[aux] (w2)--(\X-b);
    \draw[aux] (w2)--(\X-bbar);
}

\foreach \X in {Xone,Xtwo,Xp}{
    \draw[blue] (\X-a)--(y);
    \draw[blue] (\X-abar)--(y);

    \draw[magenta] (\X-u)--(c);
    \draw[magenta] (\X-ub)--(c);

    \draw[aux] (\X-b)--(z);
    \draw[aux] (\X-bbar)--(z);
}

\draw[orange] (vone)--(Xone-u);
\draw[orange] (vone)--(Xtwo-ub);
\draw[orange] (vone)--(Xp-u);

\draw[literal] (vq)--(Xone-ub);
\draw[literal] (vq)--(Xtwo-u);
\draw[literal] (vq)--(Xp-ub);

\draw[literal] (vm)--(Xone-u);
\draw[literal] (vm)--(Xtwo-u);
\draw[literal] (vm)--(Xp-ub);
\end{tikzpicture}
    \caption{Red vertices indicate a typical selected maximum 1-mutual visible set; white vertices indicate omitted vertices.(For interpretation of the references to colour in this ﬁgure legend,
the reader is referred to the web version of this article.)}
    \label{fig:placeholder}
\end{figure}

For each $i\in{1,\ldots,p}$, the variable gadget $T_i$ and the subgraph $H=G_\Phi[{y_1,y,c,z,z_1}]$ are convex in $G_\Phi$. Moreover, each singleton subgraph $L_j=G_\Phi[{v_j}]$, where $j=1,\ldots,m$, as well as $W_1=G_\Phi[{w_1}]$ and $W_2=G_\Phi[{w_2}]$, is convex. Since the vertex sets of these convex subgraphs partition $V(G_\Phi)$, Lemma~\ref{P9.prop5}, together with Proposition~\ref{P9.prop1}, yields
\[ \mu_1(G_\Phi) \le \sum_{i=1}^{p}\mu_1(T_i) + \mu_1(H) + \sum_{j=1}^{m}\mu_1(L_j) + \mu_1(W_1) + \mu_1(W_2)=5p+3+m+2=5p+m+5. \]

\noindent\textbf{Claim.}
Let
$
R=\{v_1,\ldots,v_m,w_1,w_2,y,y_1,c,z,z_1\}.
$
If $S$ is a $1$-mutual visibility set of $G_\Phi$ with
$|S|\ge 5p+m+4$, then
$|S\cap V(T_i)|=5$ for every $i\in\{1,\ldots,p\}$,
$|S\cap R|=m+4$, and hence $|S|=5p+m+4$.
Moreover, $S\cap R$ contains $w_1$, $w_2$, $c$, exactly $m-1$ clause vertices, one vertex from each of $\{y,y_1\}$ and $\{z,z_1\}$.

\smallskip
\noindent
\textbf{proof of the claim.} 
Each variable gadget $T_i$ is convex in $G_{\Phi}$, and hence $|S\cap V(T_i)|\le\mu_1(T_i)=5$ for every $i$. Therefore,
$\sum_{i=1}^{p}|S\cap V(T_i)|\le 5p$. Since $|S|\ge 5p+m+4$, it follows that $|S\cap R|\ge m+4$.
Let $d=5p-\sum_{i=1}^{p}|S\cap V(T_i)|$ and $q=|R\setminus S|$. Since $|R|=m+7$, we have $|S\cap R|=m+7-q$. Consequently,
$|S|=\sum_{i=1}^{p}|S\cap V(T_i)|+|S\cap R|=(5p-d)+(m+7-q)=5p+m+7-(d+q)$.
Since $5p+m+4\le |S|\le\mu_1(G_\Phi)\le 5p+m+5$, it follows that $2\le d+q\le 3$. It remains to show that $q\le2$ is impossible.

\smallskip
\noindent\textbf{Case 1.} $q=0$.
Then every vertex of $R$ belongs to $S$. Since the unique shortest
$(y_1,z_1)$-path is $(y_1,y,c,z,z_1)$, it contains three internal vertices
of $S$. Hence $y_1$ and $z_1$ are not $(S,1)$-visible, a contradiction.

\smallskip
\noindent\textbf{Case 2.} $q=1$.
Exactly one vertex of $R$ is omitted from $S$. If $y_1,z_1\in S$, then at least two of $y,c,z$ also belong to $S$. Hence the unique shortest $(y_1,z_1)$-path contains at least two internal vertices of $S$, contradicting the $(S,1)$-visibility of $y_1$ and $z_1$.

Now suppose that the omitted vertex is $y_1$. Then $w_1,w_2,c,z,z_1\in S$. Every shortest $(w_1,z_1)$-path has one of the forms
$(w_1,\ell,y,c,z,z_1)$,
$(w_1,v_j,s,c,z,z_1)$, or
$(w_1,v_j,w_2,r,z,z_1)$,
where $\ell\in\{a_i,\bar a_i\}$, $s\in\{u_i,\bar u_i\}$, and $r\in\{b_i,\bar b_i\}$. In each case, the path contains at least two internal vertices of $S$. Hence $w_1$ and $z_1$ are not $(S,1)$-visible, a contradiction.

The case in which $z_1$ is omitted is similar, yielding that $w_2$ and $y_1$ are not $(S,1)$-visible. Therefore, $q=1$ is impossible.

\smallskip
\noindent\textbf{Case 3.} $q=2$.
Exactly two vertices of $R$ are omitted from $S$. We distinguish three
subcases according to the number of omitted clause vertices.

\smallskip
\noindent\textbf{Subcase 3.1.} Two clause vertices are omitted. Then no vertex of
$\Omega=\{w_1,w_2,y,y_1,c,z,z_1\}$ is omitted. By the argument of
Case~1, $y_1$ and $z_1$ are not $(S,1)$-visible, a contradiction.

\smallskip
\noindent\textbf{Subcase 3.2.} Exactly one clause vertex is omitted. Then exactly one vertex of $\Omega$ is omitted. If $y_1,z_1\in S$, then at least two of $y,c,z$ also belong to $S$. Hence
$y_1$ and $z_1$ are not $(S,1)$-visible, a contradiction. Now suppose that $y_1\notin S$. Then
$w_1,w_2,c,z,z_1\in S$. By the argument of Case~2,
$w_1$ and $z_1$ are not $(S,1)$-visible, a contradiction.
The case in which $z_1$ is omitted is similar.

\smallskip
\noindent\textbf{Subcase 3.3.} No clause vertex is omitted. Then every clause vertex belongs to $S$, and exactly two vertices of
$\Omega$ are omitted.

Suppose that $y_1,z_1\in S$. To avoid the obstruction on the unique
shortest path $(y_1,y,c,z,z_1)$, at least two of $y,c,z$ must be omitted.
Hence $w_1,w_2,y_1,z_1\in S$. Since $q=2$ and $d+q\le3$, we have
$d\le1$. Therefore, at least one variable gadget $T_i$ contributes five
vertices to $S$, so exactly one of $u_i,\bar u_i$ is omitted. If $u_i\notin S$, then $\bar u_i,b_i,\bar b_i\in S$. Choose
$r\in\{b_i,\bar b_i\}$. Every shortest $(w_1,r)$-path is of the form
$(w_1,v_k,w_2,r)$ or $(w_1,v_j,\bar u_i,r)$, and in either case contains
at least two internal vertices of $S$. Hence $w_1$ and $r$ are not
$(S,1)$-visible. If $\bar u_i\notin S$, a similar argument shows that
$w_2$ is not $(S,1)$-visible with each of $a_i$ and $\bar a_i$.

Now suppose that exactly one of $y_1,z_1$ is omitted. Assume that
$y_1\notin S$. If $w_1\in S$, then, by the argument of Case~2, $w_1$ and $z_1$ are not
$(S,1)$-visible, a contradiction.
If $w_1\notin S$, then the omitted vertices of $R$ are precisely
$y_1$ and $w_1$. Hence
$v_1,\ldots,v_m,w_2,y,c,z,z_1\in S$. For any clause vertex $v_j$, every
shortest $(v_j,z_1)$-path is of the form
$(v_j,s,c,z,z_1)$ or $(v_j,w_2,r,z,z_1)$, where
$s\in\{u_i,\bar u_i\}$ and $r\in\{b_i,\bar b_i\}$. In the first path, the internal vertices $c$ and $z$ belong to $S$; in the second, the internal vertices $w_2$ and $z$ belong to $S$. Hence $v_j$ and $z_1$ are not $(S,1)$-visible. The case in which $z_1$ is omitted is similar.

Finally, suppose that both $y_1$ and $z_1$ are omitted. Then
$v_1,\ldots,v_m,w_1,w_2,y,c,z\in S$. Every shortest $(w_1,z)$-path is
obtained from one of the shortest $(w_1,z_1)$-paths described in
Case~2 by deleting the edge $zz_1$. Consequently, each such path
contains at least two internal vertices of $S$, and therefore $w_1$ and
$z$ are not $(S,1)$-visible.

Thus every possible configuration with $q=2$ leads to a contradiction.
Hence $q\ge3$. Since $d+q\le3$ and $d\ge0$, we obtain
$q=3$ and $d=0$. Consequently,
$|S\cap R|=m+4$ and
$\sum_{i=1}^{p}|S\cap V(T_i)|=5p$.
Therefore each variable gadget contributes exactly five vertices, and
$|S|=5p+m+4$.

Next, we show that exactly one clause vertex is omitted. Since $q=3$,
exactly three vertices of $R$ are omitted from $S$. Suppose, to the
contrary, that no clause vertex is omitted. Then the three omitted
vertices all belong to $\Omega$. Since $d=0$, every variable gadget
contributes exactly five vertices to $S$. Consequently, for every $i$,
exactly one of $u_i$ and $\bar u_i$ is omitted, while all four vertices
$a_i,\bar a_i,b_i,\bar b_i$ belong to $S$.

If $w_1,w_2\in S$, then, by the argument of Subcase~3.3, either
$w_1$ is not $(S,1)$-visible with any selected right vertex or
$w_2$ is not $(S,1)$-visible with any selected left vertex, according as
$u_i\notin S$ or $\bar u_i\notin S$, respectively. Hence
$w_1,w_2\in S$ is impossible. Thus at least one of $w_1$ and $w_2$ is omitted. Assume that
$w_1\notin S$. If $w_2\notin S$, then exactly one vertex of the path
$(y_1,y,c,z,z_1)$ is omitted. Consequently, the two selected endmost
vertices on this path are not $(S,1)$-visible. Hence
$w_1,w_2\notin S$ is impossible.

Suppose that $w_1\notin S$ and $w_2\in S$. Then exactly two vertices of
$\{y,y_1,c,z,z_1\}$ are omitted.
Table~\ref{P9.tab1} lists, for each possible choice of omitted vertices, a selected pair that is not $(S,1)$-visible. Here
$\ell\in\{a_i,\bar a_i\}$ and
$r\in\{b_k,\bar b_k\}$ are selected left and right vertices,
respectively, chosen from distinct variable gadgets. Similarly, the case $w_2\notin S$ and $w_1\in S$ is impossible.

Therefore, the assumption that all clause vertices belong to $S$ leads to a
contradiction. Hence at least one clause vertex is omitted from $S$.
\begin{table}[ht]  \centering \renewcommand{\arraystretch}{1.35}
\begin{tabular}{p{3.5cm}|c|p{6.8cm}}
\hline
\textbf{Omitted vertices from $\{y,y_1,c,z,z_1\}$}
&
\textbf{Selected pair}
&
\textbf{Reason} \\
\hline
$\{y,c\},\ \{y_1,c\}$
&
$w_2,z_1$
&
Every shortest path contains $z$ and either $r$ or $v_j$. \\
\hline
$\{y,z\},\{c,z\},$ $ \{c,z_1\},\ \{z,z_1\}$
&
$w_2,y_1$
&
Every shortest path contains two selected vertices among $r,z,c,y,v_j$. \\
\hline
$\{y,y_1\}$
&
$\ell,z_1$
&
Every shortest path contains $c,z$ as selected internal vertices. \\
\hline
$\{y,z_1\},\ \{y_1,z_1\}$
&
$\ell,r$
&
Every shortest $\ell,r$-path uses either $c,z$ or $v_j,w_2$ as selected internal vertices. \\
\hline
$\{y_1,z\}$
&
$w_2,y$
&
Every shortest path contains $c$ and either $r$ or $v_j$. \\
\hline
\end{tabular}\caption{Selected pairs that are not $(S,1)$-visible for the possible omissions from $\{y,y_1,c,z,z_1\}$.} \label{P9.tab1} \end{table}

If at least two of the omitted vertices are clause vertices, then at most one vertex of $\Omega$ is omitted from $S$. Hence at least four vertices of the path $(y_1,y,c,z,z_1)$ belong to $S$, contradicting the $(S,1)$-visibility of its two selected endmost vertices. Therefore, exactly one clause vertex is omitted from $S$.

It remains to determine which vertices of $\Omega$ belong to $S$.
Since $|R|=m+7$ and $|S\cap R|=m+4$, exactly three vertices of $R$ are omitted
from $S$. As exactly one of them is the clause vertex, say $v_q$, exactly two
vertices are omitted from
$\Omega=\{w_1,w_2,y,y_1,c,z,z_1\}$.

We first show that $w_1,w_2\in S$. Suppose, to the contrary, that
$w_1\notin S$. If $w_2\notin S$, then
$y,y_1,c,z,z_1\in S$. By the argument of Case~1,
$y_1$ and $z_1$ are not $(S,1)$-visible, a contradiction.
Hence $w_2\in S$. If the second omitted vertex belongs to $\{y,c,z\}$, then
$y_1,z_1\in S$. Again, by the argument of Case~2,
$y_1$ and $z_1$ are not $(S,1)$-visible, a contradiction. If the second omitted vertex is $y_1$, then
$z,z_1,w_2\in S$. Since every variable gadget contributes five vertices,
both $b_i$ and $\bar b_i$ belong to $S$ for every $i$. Hence the shortest
paths $(w_2,b_i,z,z_1)$ and $(w_2,\bar b_i,z,z_1)$ each contain two selected
internal vertices. Therefore $w_2$ and $z_1$ are not $(S,1)$-visible, a
contradiction. If the second omitted vertex is $z_1$, then
$y_1,y,c,z\in S$, and the unique shortest $(y_1,z)$-path contains two
selected internal vertices. Hence $y_1$ and $z$ are not $(S,1)$-visible, a
contradiction. Therefore $w_1\in S$. By a similar argument, $w_2\in S$.

Next, we show that $c\in S$. Suppose, to the contrary, that $c\notin S$.
Since $w_1,w_2\in S$, the other omitted vertex of $\Omega$ belongs to
$\{y,y_1,z,z_1\}$. If the second omitted vertex is $y$ or $y_1$, then
$z,z_1\in S$. Every shortest $(w_2,z_1)$-path is of the form
$(w_2,b_i,z,z_1)$ or $(w_2,\bar b_i,z,z_1)$, and hence contains two
selected internal vertices. Therefore $w_2$ and $z_1$ are not
$(S,1)$-visible, a contradiction. If the second omitted vertex is $z$ or $z_1$, a similar argument shows that
$w_1$ and $y_1$ are not $(S,1)$-visible, a contradiction. Hence $c\in S$.

Therefore, the two omitted vertices of $\Omega$ must belong to
$\{y,y_1,z,z_1\}$. If both $y$ and $y_1$ are omitted, then $z,z_1,w_2\in S$, and
$w_2$ and $z_1$ are not $(S,1)$-visible. Similarly, if both $z$ and
$z_1$ are omitted, then $y,y_1,w_1\in S$, and $w_1$ and $y_1$ are not
$(S,1)$-visible. Hence exactly one vertex is omitted from each of
$\{y,y_1\}$ and $\{z,z_1\}$.

Equivalently, $S\cap R$ contains $w_1,w_2,c$, exactly $m-1$ clause
vertices, one vertex from each of $\{y,y_1\}$ and $\{z,z_1\}$. This
completes the proof of the claim.

Next, we establish the equivalence.

Suppose that $\Phi$ is satisfiable, and let $\alpha$ be a satisfying
truth assignment. For each variable $x_i$, define
\[
S_i=
\begin{cases}
V(T_i)\setminus\{u_i\}, & \text{if }\alpha(x_i)=\mathrm{true},\\[1mm]
V(T_i)\setminus\{\bar u_i\}, & \text{if }\alpha(x_i)=\mathrm{false}.
\end{cases}
\]

Choose one clause $C_q$. Since $\alpha$ satisfies $C_q$, it contains a true
literal. Define
\[
S=
\left(\bigcup_{i=1}^{p}S_i\right)
\cup
\{v_j: j\ne q\}
\cup
\{w_1,w_2,y_1,c,z_1\}.
\]
Then $|S|=5p+(m-1)+5=5p+m+4=K$.

We claim that $S$ is a $1$-mutual visibility set. Since each variable gadget
$T_i$ and $H$ are convex, $S_i$ is a $1$-mutual visibility set of $T_i$ for
every $i$, and $S\cap V(H)=\{y_1,c,z_1\}$ is a $1$-mutual visibility set of
$H$, it suffices to verify the $(S,1)$-visibility of pairs of selected
vertices that do not both belong to the same convex subgraph.

For two selected clause vertices $v_j$ and $v_k$, the path
$(v_j,w_1,v_k)$ has only one internal vertex from $S$. Now let
$v_j\in S$ be a selected clause vertex. Since $C_j$ is satisfied, choose
a true literal $\ell_j$ of $C_j$. Then $v(\ell_j)\notin S$. Hence the
paths $(v_j,v(\ell_j),c,y,y_1)$ and
$(v_j,v(\ell_j),c,z,z_1)$ witness the $(S,1)$-visibility of
$v_j$ with $y_1$ and $z_1$, respectively. Also, the path
$(v_j,v(\ell_j),c)$ shows that $v_j$ and $c$ are $(S,1)$-visible.

Now let $v_q$ be the omitted clause vertex. Since $C_q$ is satisfied,
choose a true literal $\ell_q$ of $C_q$. Then $v(\ell_q)\notin S$. The
path $(w_1,v_q,w_2)$ witnesses the $(S,1)$-visibility of $w_1$ and
$w_2$. Moreover, the paths
$(w_1,v_q,v(\ell_q),c)$ and
$(w_2,v_q,v(\ell_q),c)$ witness the $(S,1)$-visibility of the pairs
$(w_1,c)$ and $(w_2,c)$, respectively. Finally, the paths
$(w_1,v_q,v(\ell_q),c,z,z_1)$ and
$(w_2,v_q,v(\ell_q),c,y,y_1)$ witnesses the
$(S,1)$-visibility of the pairs $(w_1,z_1)$ and $(w_2,y_1)$,
respectively.

For every $i$, let
$A_i=\{a_i,\bar a_i\}$ and $B_i=\{b_i,\bar b_i\}$.
If $x\in A_i$ and $x'\in A_k$ with $i\neq k$, then
$(x,y,x')$ is a shortest $(x,x')$-path. Hence every pair of selected
left vertices is $(S,1)$-visible. Similarly, if
$r\in B_i$ and $r'\in B_k$ with $i\neq k$, then
$(r,z,r')$ is a shortest $(r,r')$-path. Hence every pair of selected
right vertices is $(S,1)$-visible. Now let $x\in A_i$ and $r\in B_k$ with $i\neq k$. Then
$(x,y,c,z,r)$ is a shortest $(x,r)$-path, whose only selected internal
vertex is $c$. Hence $x$ and $r$ are $(S,1)$-visible.

Let $s_i$ denote the selected literal vertex of $T_i$. If
$x\in\{a_k,\bar a_k\}$ with $k\neq i$, then
$(s_i,c,y,x)$ is a shortest $(s_i,x)$-path. Likewise, if
$r\in\{b_k,\bar b_k\}$ with $k\neq i$, then
$(s_i,c,z,r)$ is a shortest $(s_i,r)$-path. Each of these paths has $c$
as its only selected internal vertex.

All remaining pairs are either adjacent or are witnessed by one of the following shortest paths:
\[
(a_i,y,y_1),\quad
(\bar a_i,y,y_1),\quad
(b_i,z,z_1),\quad
(\bar b_i,z,z_1),
\]
\[
(a_i,y,c),\quad
(\bar a_i,y,c),\quad
(b_i,z,c),\quad
(\bar b_i,z,c),
\]
\[
(a_i,y,c,z,z_1),\quad
(\bar a_i,y,c,z,z_1),\quad
(b_i,z,c,y,y_1),\quad
(\bar b_i,z,c,y,y_1),
\]
\[
(u_i,c,y,y_1),\quad
(\bar u_i,c,y,y_1),\quad
(u_i,c,z,z_1),\quad
(\bar u_i,c,z,z_1),
\]
\[
(a_i,w_1,v_j),\quad
(\bar a_i,w_1,v_j),\quad
(b_i,w_2,v_j),\quad
(\bar b_i,w_2,v_j),
\]
\[
(w_1,a_i,y,y_1),\quad
(w_2,b_i,z,z_1).
\]
Each displayed path contains at most one selected internal vertex. Therefore, $S$ is a $1$-mutual visibility set of cardinality $K$, and hence $\mu_1(G_\Phi)\ge K$.

Conversely, suppose that $\mu_1(G_\Phi)\ge K$, and let $S$ be a $1$-mutual visibility set with $|S|\ge K$. By the claim, $|S\cap V(T_i)|=5$ for every $i$. Hence, for each $i$, the omitted vertex of $T_i$ is exactly one of $u_i$ and $\bar u_i$. Define a truth assignment $\alpha$ by setting $\alpha(x_i)=\mathrm{true}$ if $u_i\notin S$, and $\alpha(x_i)=\mathrm{false}$ if $\bar u_i\notin S$. Again by the claim, exactly one clause vertex, say $v_q$, is omitted from $S$, and $w_1,w_2,c\in S$. Moreover, $S$ contains one vertex from each of ${y,y_1}$ and ${z,z_1}$.

Let $C_j$ be a clause with $v_j\in S$. Since $v_j$ and the selected
vertex of $\{z,z_1\}$ are $(S,1)$-visible, there exists a shortest path
between them containing at most one internal vertex of $S$. Every such
shortest path passes through a literal vertex of $C_j$ and then through
$c$. Since $c\in S$, it follows that, for some
$t\in\{1,2,3\}$, $v(\ell_{jt})\notin S$. By the definition of $\alpha$,
the literal $\ell_{jt}$ is true. Hence $C_j$ is satisfied.

It remains to consider the omitted clause $C_q$. Since $w_1$ and the
selected vertex of $\{z,z_1\}$ belong to $S$, they are
$(S,1)$-visible. Hence there exists a shortest path between them
containing at most one internal vertex of $S$. Such a path must pass
through a clause vertex, a literal vertex of the corresponding clause,
and $c$. Since $c\in S$, the clause vertex must be $v_q$, and therefore,
for some $t\in\{1,2,3\}$, we have $v(\ell_{qt})\notin S$. By the
definition of $\alpha$, the literal $\ell_{qt}$ is true. Hence $C_q$ is
also satisfied.

Therefore every clause of $\Phi$ is satisfied by $\alpha$, and hence
$\Phi$ is satisfiable. Since the construction is polynomial in the size of $\Phi$, this is a
polynomial-time reduction from \textsc{3-SAT} to
\textsc{$1$-Mutual Visibility}. Therefore,
\textsc{$1$-Mutual Visibility} is NP-hard.
\end{proof}
\begin{theorem}\label{P9.th10}
For every fixed integer $k\ge 0$, the decision problem
\textsc{$k$-Mutual Visibility} is NP-complete.
\end{theorem}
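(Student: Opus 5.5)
Membership in NP is immediate: a certificate is a set $S\subseteq V(G)$ with $|S|\ge Q$, and by Theorem~\ref{P9.th5} Algorithm \textsc{kMV} checks in time $O\bigl(|S|(|V|+|E|)+|S|^2\bigr)$ whether $S$ is a $k$-mutual visible set. So the work is NP-hardness for every fixed $k\ge 0$. I will bootstrap from two known base cases, $k=0$ and $k=1$, and lift them with the pendant-path construction of Theorem~\ref{P9.th7}.

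For $k=0$, \textsc{$0$-Mutual Visibility} is the classical mutual-visibility decision problem. Its NP-completeness is known from Di Stefano~\cite{Stefano}, and I will cite it. For $k=1$, Theorem~\ref{P9.th9} gives NP-hardness directly. Now fix $k\ge 2$ and write $k=2r+\varepsilon$ with $r=\lfloor k/2\rfloor\ge 1$ and $\varepsilon\in\{0,1\}$. Given an instance $(G,Q)$ of \textsc{$\varepsilon$-Mutual Visibility} with $|V(G)|=n$, I build $G'$ by attaching a path of length $r-1$ to each vertex of $G$ by an edge, exactly as in Theorem~\ref{P9.th7}, and set $Q'=rn+Q$. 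The graph $G'$ has $(r+1)n$ vertices, so for fixed $k$ the construction is polynomial, and $Q'\le |V(G')|$. Theorem~\ref{P9.th7} gives $\mu_{k}(G')=rn+\mu_\varepsilon(G)$, hence $\mu_k(G')\ge Q'$ if and only if $\mu_\varepsilon(G)\ge Q$. This is a polynomial reduction from the NP-hard \textsc{$\varepsilon$-Mutual Visibility} problem, so \textsc{$k$-Mutual Visibility} is NP-hard. Alternatively, one could iterate the corona construction via Theorem~\ref{P9.th8} with $m=1$, but the pendant-path version reduces in a single step.

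The main point needing care is making sure Theorem~\ref{P9.th7} really applies. First, its upper-bound argument defines $D=\{v:B_v\subseteq S\}$ and needs every shortest $(u_r,v_r)$-path to pass through the whole attached paths. This holds because each attached path is a pendant path, so there is no problem there. Second, the lower-bound count in the theorem ("the segment in $G$ contains at most $\varepsilon+2$ vertices of $X$") needs checking when one endpoint lies in $X\subseteq V(G)$ rather than in $L$. In that case the count only gets smaller, so the bound survives. I would re-verify this mixed case explicitly before relying on it. Beyond that, the remaining obstacle is only bookkeeping: confirming that $Q'$ is a valid instance parameter and that the reduction is uniform for each fixed $k$.
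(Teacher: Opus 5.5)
Your proposal is correct and follows the paper's proof essentially verbatim: NP membership via Algorithm \textsc{kMV} (Theorem~\ref{P9.th5}), the base cases $k=0$ (Di Stefano) and $k=1$ (Theorem~\ref{P9.th9}), and the lift to $k=2r+\varepsilon\ge 2$ through the pendant-path graph $G'$ of Theorem~\ref{P9.th7} with $Q'=rn+Q$, which the paper simply writes out separately for even and odd $k$. One small remark on your check of Theorem~\ref{P9.th7}: the case that actually needs an argument is when the base vertex of an attached path is not in $X$, since $\varepsilon$-mutual visibility of $X$ then says nothing directly about that pair; it is handled by rerouting a shortest path in $G$ between its first and last $X$-vertices along a witness path, which leaves at most $\varepsilon+2$ vertices of $X$ on the $G$-segment.
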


\begin{proof}
Membership in NP follows from Theorem~\ref{P9.th5}, since Algorithm
\textsc{kMV} verifies whether a given subset is a $k$-mutual visibility
set in polynomial time.

The case $k=0$ was proved in \cite{Stefano}. Hence assume that $k\ge1$. If $k=1$, NP-hardness follows from Theorem~\ref{P9.th9}. Therefore,
\textsc{$1$-Mutual Visibility} is NP-complete.

Now let $k\ge2$. If $k$ is even, write $k=2r$, where $r\ge1$. Given an instance
$(G,Q)$ of \textsc{Mutual Visibility}, construct $G'$ from $G$ as in
Theorem~\ref{P9.th7}. Since $
\mu_k(G')=\mu_{2r}(G')=rn+\mu(G),$
where $n=|V(G)|$, setting $Q'=rn+Q$ gives $\mu(G)\ge Q
\iff
\mu_k(G')\ge Q'.$ Hence \textsc{Mutual Visibility} polynomially reduces to
\textsc{$k$-Mutual Visibility}.

If $k$ is odd, write $k=2r+1$, where $r\ge1$. By
Theorem~\ref{P9.th7}, $
\mu_k(G')=\mu_{2r+1}(G')=rn+\mu_1(G).$ Thus, setting $Q'=rn+Q$ yields $\mu_1(G)\ge Q
\iff
\mu_k(G')\ge Q'.$
Hence \textsc{$1$-Mutual Visibility} polynomially reduces to
\textsc{$k$-Mutual Visibility}. Since
\textsc{$1$-Mutual Visibility} is NP-hard by
Theorem~\ref{P9.th9}, it follows that
\textsc{$k$-Mutual Visibility} is NP-hard.

In each case, the construction adds exactly $rn$ vertices and $rn$
edges, and is therefore computable in polynomial time. Consequently,
\textsc{$k$-Mutual Visibility} is NP-hard. Since it belongs to NP, it is
NP-complete.
\end{proof}
\section{$k$-mutual visibility covering number}
\begin{defn}
Let $G$ be a graph. 
A $k$-mutual visibility cover of $G$ is a partition 
$\mathcal{P}$ of $V(G)$ such that each part of 
$\mathcal{P}$ is a  $k$-mutual visible set in $G$. 
The $k$-mutual visibility covering number of $G$, 
denoted by $\tau_{\mathrm{k}}(G)$, is the minimum 
cardinality of a $k$-mutual visibility cover of $G$.
\end{defn}
\begin{remark}\label{P11.rem1}
The mutual $0$-visibility covering number of $G$
coincides with the mutual-visibility chromatic number introduced
in~\cite{MVCN1}.
\end{remark}
\begin{prop}\label{P11.prop1}
Let $G$ be a graph and let $0\le k\le \ell$.
Then $\tau_\ell(G)\le \tau_k(G)$.
\end{prop}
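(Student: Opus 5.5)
The plan is to show directly that every $k$-mutual visibility cover of $G$ is also an $\ell$-mutual visibility cover of $G$. The inequality then follows by taking a cover of minimum size.

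First, I record the set-level monotonicity already noted after the definition of $\mu_k(G)$. Let $X\subseteq V(G)$ be a $k$-mutual visible set and let $u,v\in X$ be distinct. Then there is a shortest $(u,v)$-path whose number of internal vertices in $X$ is at most $k$. Since $k\le \ell$, this number is also at most $\ell$, so $X$ is an $\ell$-mutual visible set.

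Next, let $\mathcal{P}$ be a $k$-mutual visibility cover of $G$ with $|\mathcal{P}|=\tau_k(G)$. Such a cover exists because the partition of $V(G)$ into singletons is trivially a $k$-mutual visibility cover, so $\tau_k(G)$ is well defined. Each part of $\mathcal{P}$ is a $k$-mutual visible set in $G$, hence an $\ell$-mutual visible set by the previous step. The partition property does not depend on $k$, so $\mathcal{P}$ is an $\ell$-mutual visibility cover of $G$. By minimality, $\tau_\ell(G)\le |\mathcal{P}|=\tau_k(G)$.

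There is no substantive obstacle here. The only point needing care is that visibility of a part is measured with respect to that part itself, as each part is required to be a $k$-mutual visible set in $G$. So the monotonicity argument applies to each part independently, with no interaction between parts.
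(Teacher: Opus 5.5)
Your proof is correct and matches the paper's argument: every $k$-mutual visible set is $\ell$-mutual visible, so a minimum $k$-mutual visibility cover is also an $\ell$-mutual visibility cover, which gives $\tau_\ell(G)\le\tau_k(G)$. You also note that the singleton partition shows $\tau_k(G)$ is well defined, a point the paper leaves implicit.
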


\begin{proof}
Let $\mathcal{P}$ be a $k$-mutual visibility cover of $G$.
Since every  $k$-mutual visible set is also $\ell$-mutual visible, every part of $\mathcal{P}$
is $\ell$-mutual visible. Hence $\mathcal{P}$ is also a mutual $\ell$-visibility cover,
and so $\tau_\ell(G)\le |\mathcal{P}|=\tau_k(G)$.
\end{proof}
\begin{lemma}\label{P11.lem1}
Let $G$ be a graph of order $n$ and let $k\ge 0$.
Then $\tau_k(G)\le \left\lceil \frac{n}{k+2}\right\rceil$.
\end{lemma}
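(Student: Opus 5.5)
The plan is to build an explicit partition of $V(G)$ into $\left\lceil \frac{n}{k+2}\right\rceil$ parts, each of size at most $k+2$, and to show that every such small set is automatically a $k$-mutual visible set. The bound on $\tau_k(G)$ then follows directly from the definition of the covering number as a minimum.

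\textbf{Step 1: small sets are $k$-mutual visible.} Let $X\subseteq V(G)$ with $|X|\le k+2$, and let $u,v\in X$ be distinct.
\begin{itemize}
\item Since $G$ is assumed connected throughout the paper, a shortest $(u,v)$-path $P$ exists.
\item The internal vertices of $P$ that lie in $X$ form a subset of $X\setminus\{u,v\}$.
\item Hence there are at most $|X|-2\le k$ of them, so $u$ and $v$ are $(X,k)$-visible.
\end{itemize}
As $u,v$ were arbitrary, $X$ is a $k$-mutual visible set. This is the same counting idea that gives the lower bound in Proposition~\ref{P9.prop1}, but here it is applied in an arbitrary graph.

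\textbf{Step 2: the partition.} List the vertices of $G$ in any order as $v_1,\ldots,v_n$. Set
\[
t=\left\lceil \frac{n}{k+2}\right\rceil,\qquad X_i=\{v_j : (i-1)(k+2)<j\le \min\{i(k+2),n\}\}\quad (1\le i\le t).
\]
These sets are nonempty and pairwise disjoint, their union is $V(G)$, and each has at most $k+2$ elements. By Step~1, each $X_i$ is a $k$-mutual visible set, so $\{X_1,\ldots,X_t\}$ is a $k$-mutual visibility cover of $G$. Therefore $\tau_k(G)\le t=\left\lceil \frac{n}{k+2}\right\rceil$.

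\textbf{Main obstacle.} There is essentially no hard step. The only point needing care is that $(X,k)$-visibility requires a shortest path to exist, which the paper's standing connectivity assumption guarantees.
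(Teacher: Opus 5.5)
Your proof is correct and is essentially the paper's own argument: partition $V(G)$ into $\lceil n/(k+2)\rceil$ parts of size at most $k+2$, and note that any internal vertex of a shortest $(u,v)$-path lying in a part $P$ belongs to $P\setminus\{u,v\}$, so there are at most $|P|-2\le k$ of them. Your explicit indexing of the parts, which also shows they are nonempty, and your remark on the standing connectivity assumption are small details that the paper leaves implicit.
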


\begin{proof}
Partition $V(G)$ into 
$t=\left\lceil n/(k+2)\right\rceil$ parts, each of size at most $k+2$.
Let $P$ be a part and let $u,v\in P$ be distinct.
Let $Q$ be a shortest $(u,v)$-path in $G$.
Any internal vertex of $Q$ that lies in $P$ belongs to $P\setminus\{u,v\}$,
so there are at most $|P|-2\le k$ such vertices.
Hence $u$ and $v$ are $(P,k)$-visible, and therefore $P$ is a  $k$-mutual visible set.
It follows that this partition is a $k$-mutual visibility cover of $G$ of size $t$,
and the result follows.
\end{proof}
\begin{lemma}\label{P11.lem2}
Let $G$ be a graph of order $n$ and let $k\ge 0$. Then
\[
\left\lceil \frac{n}{\mu_k(G)}\right\rceil \le \tau_k(G)
\le 1+\left\lceil \frac{n-\mu_k(G)}{k+2}\right\rceil .
\]
\end{lemma}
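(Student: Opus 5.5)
The two inequalities are independent, and I will prove them separately, both directly from the definitions.

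For the lower bound, I take a $k$-mutual visibility cover $\mathcal{P}=\{P_1,\ldots,P_t\}$ of $G$ with $t=\tau_k(G)$. Each part $P_i$ is a $k$-mutual visible set in $G$, so $|P_i|\le \mu_k(G)$ by the definition of $\mu_k(G)$. Since the parts partition $V(G)$,
\[
n=\sum_{i=1}^{t}|P_i|\le t\,\mu_k(G),
\]
so $t\ge n/\mu_k(G)$. Because $t$ is an integer, this gives $\tau_k(G)\ge \lceil n/\mu_k(G)\rceil$.

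For the upper bound, I fix a maximum $k$-mutual visible set $X$ with $|X|=\mu_k(G)$ and make it one part of the cover.
\begin{itemize}
\item If $X=V(G)$, the one-part partition $\{V(G)\}$ is already a cover. Then $\tau_k(G)=1$, which matches the right-hand side with $n-\mu_k(G)=0$.
\item Otherwise, I split $V(G)\setminus X$, which has $n-\mu_k(G)$ vertices, into $\lceil (n-\mu_k(G))/(k+2)\rceil$ parts, each of size at most $k+2$.
\end{itemize}

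Each small part is $k$-mutual visible by the argument in the proof of Lemma~\ref{P11.lem1}. That argument applies to any vertex subset $P$ of $G$ with $|P|\le k+2$, not only to a partition of the whole of $V(G)$. For distinct $u,v\in P$, any shortest $(u,v)$-path in $G$ has at most $|P|-2\le k$ internal vertices in $P$.

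The visibility condition for each part refers only to that part and is measured in $G$. So the parts do not interact, and $\{X\}$ together with the small parts is a $k$-mutual visibility cover of $G$. Its size is $1+\lceil (n-\mu_k(G))/(k+2)\rceil$, which gives the upper bound.

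There is no real obstacle here. The only point needing care is to note explicitly that the Lemma~\ref{P11.lem1} reasoning applies to $V(G)\setminus X$, a vertex set of $G$, rather than to an induced subgraph. This holds because shortest paths are always taken in $G$, and only the count of internal vertices lying in the part matters.
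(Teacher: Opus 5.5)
Your proof is correct and follows the same route as the paper. The lower bound comes from bounding every part of a cover by $\mu_k(G)$ and summing; the upper bound comes from taking a maximum $k$-mutual visible set as one part and splitting the remaining $n-\mu_k(G)$ vertices into parts of size at most $k+2$. Your separate handling of the case $X=V(G)$, and your remark that the Lemma~\ref{P11.lem1} argument applies to any vertex subset of size at most $k+2$, are small points of care that the paper leaves implicit.
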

\begin{proof}
Let $\mathcal{P}$ be a $k$-mutual visibility cover of $G$. 
Since each part of $\mathcal{P}$ is a  $k$-mutual visible set, its size is at most $\mu_k(G)$. 
Hence $n=\sum_{P\in\mathcal{P}} |P| \le |\mathcal{P}|\,\mu_k(G)$, which gives the lower bound.

For the upper bound, let $M\subseteq V(G)$ be a  $k$-mutual visible set of maximum cardinality, so $|M|=\mu_k(G)$. 
Partition $V(G)\setminus M$ into parts of size at most $k+2$. 
Each such part is  $k$-mutual visible, and together with $M$ this yields a $k$-mutual visibility cover of size 
$1+\left\lceil (n-\mu_k(G))/(k+2)\right\rceil$.
\end{proof}
\begin{prop}\label{P11.prop4}
Let $G$ be a graph with diameter $d$ and let $k\ge 0$. Then $\tau_k(G)=1$ if and only if $k\ge d-1$.
\end{prop}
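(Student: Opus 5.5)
The plan is to reduce the statement to Theorem~\ref{P9.th3} by observing that $\tau_k(G)=1$ holds exactly when $V(G)$ itself is a $k$-mutual visible set. A partition of $V(G)$ with a single part must consist of the part $V(G)$. Hence $\tau_k(G)=1$ if and only if the trivial partition $\{V(G)\}$ is a $k$-mutual visibility cover. This in turn happens if and only if $V(G)$ is a $k$-mutual visible set, that is, if and only if $\mu_k(G)=|V(G)|$.

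With this observation in hand, Theorem~\ref{P9.th3} gives $\mu_k(G)=|V(G)|$ if and only if $k\ge d-1$, and the equivalence follows at once. For a self-contained presentation, I would also spell out both directions directly.
\begin{itemize}
\item If $k\ge d-1$, every geodesic in $G$ has length at most $d$ and hence at most $d-1\le k$ internal vertices. So any pair $u,v\in V(G)$ is $(V(G),k)$-visible, and $\{V(G)\}$ is a cover of size one.
\item Conversely, if $\tau_k(G)=1$, take a diametral pair $u,v$ with $d_G(u,v)=d$. Every shortest $(u,v)$-path has exactly $d-1$ internal vertices, all of which lie in the single part $V(G)$. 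Visibility therefore forces $d-1\le k$.
\end{itemize}

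There is no real obstacle in this argument. The only point needing care is the degenerate case of a single-vertex graph, where $d=0$ and the condition $k\ge -1$ holds trivially while $\tau_k(G)=1$, so the statement remains consistent. I would also note that connectivity, which is assumed throughout the paper, is needed for $d$ to be finite.
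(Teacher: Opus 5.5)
Your proposal is correct and follows the paper's proof: both observe that $\tau_k(G)=1$ exactly when the single-part partition $\{V(G)\}$ is a $k$-mutual visibility cover, i.e.\ when $\mu_k(G)=|V(G)|$, and then invoke Theorem~\ref{P9.th3}. Your direct check of the two directions simply repeats the proof of that theorem, and your remark on the one-vertex case ($d=0$) is a harmless extra.
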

\begin{proof}
Since $G$ is connected, $\tau_k(G)=1$ if and only if $V(G)$
is a  $k$-mutual visible set, that is, if and only if
$\mu_k(G)=|V(G)|$.
By Theorem~\ref{P9.th3}, this holds precisely when
$k\ge d-1$.
\end{proof}
\begin{theorem}\label{P11.prop5}
Let $n\ge 1$ and let $k\ge 0$. Then $\tau_k(P_n)=\left\lceil \frac{n}{k+2}\right\rceil$.
\end{theorem}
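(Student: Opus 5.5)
The proof will establish matching upper and lower bounds, each obtained directly from an earlier result.

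For the upper bound we invoke Lemma~\ref{P11.lem1}. It applies to any graph of order $n$ and gives $\tau_k(P_n)\le \lceil n/(k+2)\rceil$ at once. The witnessing cover simply splits $V(P_n)$ into parts of size at most $k+2$. A natural choice is consecutive blocks along the path, although any partition into parts of that size works.

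For the lower bound we use the left inequality of Lemma~\ref{P11.lem2}, which says $\tau_k(G)\ge \lceil n/\mu_k(G)\rceil$. We combine it with Proposition~\ref{P9.prop1}, which gives $\mu_k(P_n)=\min\{n,k+2\}$. We then split into two cases.
\begin{itemize}
\item If $k+2\le n$, then $\mu_k(P_n)=k+2$, so $\tau_k(P_n)\ge \lceil n/(k+2)\rceil$. This matches the upper bound.
\item If $k+2>n$, then $\lceil n/(k+2)\rceil=1$ because $1\le n<k+2$. Since $\tau_k(P_n)\ge 1$ trivially for $n\ge1$, equality again holds.
\end{itemize}
As a consistency check on the second case, Proposition~\ref{P11.prop4} gives the same conclusion: $\operatorname{diam}(P_n)=n-1$, and $k\ge n-1>n-2=d-1$, so $\tau_k(P_n)=1$.

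There is no real obstacle here. The only point needing care is the degenerate regime $k+2>n$, where $\mu_k(P_n)=n$ rather than $k+2$, so the lower bound must be handled separately as above. The case $n=1$ also needs a moment's attention: $P_1$ has diameter $0$, and the single vertex trivially forms a $k$-mutual visible set, so both sides equal $1$.
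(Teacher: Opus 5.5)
Your proof is correct and follows the paper's own argument: the upper bound comes from Lemma~\ref{P11.lem1}, and the lower bound comes from the counting bound $\tau_k(P_n)\ge\lceil n/\mu_k(P_n)\rceil$ combined with Proposition~\ref{P9.prop1}. The only difference is that you treat the case $k+2>n$ separately, whereas the paper absorbs it into the single identity $\lceil n/\min\{n,k+2\}\rceil=\lceil n/(k+2)\rceil$.
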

\begin{proof}
Since each part of a $k$-mutual visibility cover is a  $k$-mutual visible set,
its cardinality is at most $\mu_k(P_n)$.
Hence $\tau_k(P_n)\ge \left\lceil n/\mu_k(P_n)\right\rceil$.
By Proposition~\ref{P9.prop1}, $\mu_k(P_n)=\min\{n,k+2\}$,
and therefore $\left\lceil n/\mu_k(P_n)\right\rceil
=\left\lceil n/\min\{n,k+2\}\right\rceil
=\left\lceil n/(k+2)\right\rceil$.
Thus $\tau_k(P_n)\ge \left\lceil n/(k+2)\right\rceil$.
On the other hand, Lemma~\ref{P11.lem1} yields
$\tau_k(P_n)\le \left\lceil n/(k+2)\right\rceil$.
Hence $\tau_k(P_n)=\left\lceil n/(k+2)\right\rceil$.
\end{proof}
\begin{remark}
The bound in Lemma~\ref{P11.lem1} is sharp, as equality holds for paths by Proposition~\ref{P11.prop5}.
\end{remark}
\begin{theorem}\label{P11.prop2}
Let $n\ge 3$ and let $k\ge 0$. Then
$\tau_k(C_n)=\left\lceil \frac{n}{\min\{n,2k+3\}}\right\rceil$.
In particular, if $2k+3<n$, then
$\tau_k(C_n)=\left\lceil n/(2k+3)\right\rceil$.
\end{theorem}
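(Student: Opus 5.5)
The lower bound should follow directly from Lemma~\ref{P11.lem2}, and the upper bound needs an explicit partition of $V(C_n)$. For the lower bound, Lemma~\ref{P11.lem2} gives $\tau_k(C_n)\ge\lceil n/\mu_k(C_n)\rceil$. For $0\le k\le n-2$, Proposition~\ref{P9.prop2} gives $\mu_k(C_n)=\min\{n,2k+3\}$. For $k>n-2$ we have $k\ge \lfloor n/2\rfloor-1=\operatorname{diam}(C_n)-1$, so Theorem~\ref{P9.th3} gives $\mu_k(C_n)=n=\min\{n,2k+3\}$ as well. Hence
\[
\tau_k(C_n)\ge \left\lceil \frac{n}{\min\{n,2k+3\}}\right\rceil
\]
in all cases. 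If $2k+3\ge n$, the right-hand side is $1$, and Proposition~\ref{P11.prop4} (with $d=\lfloor n/2\rfloor\le k+1$) gives equality. So the real work is the case $2k+3<n$, where we must exhibit a $k$-mutual visibility cover with $t=\lceil n/(2k+3)\rceil\ge 2$ parts.

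For the construction I would use a round-robin, interleaved partition rather than consecutive arcs. Consecutive arcs fail: two far-apart vertices of one arc may have their unique geodesic running inside the arc, through many selected vertices. Label the vertices $0,1,\dots,n-1$ cyclically and let part $P_j$ ($0\le j<t$) consist of the vertices $\lfloor (jn+i\,n')/ \ldots\rfloor$. Concretely, I would take $P_j=\{v: v\equiv j \pmod t\}$, possibly adjusted so that the cyclic gaps between consecutive members of each part take only two consecutive values $g,g+1$, with the longer gaps spread out evenly. Each part then has at most $\lceil n/t\rceil\le 2k+3$ vertices, and consecutive members are roughly $t$ apart.

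The key step is a visibility criterion for a single part $X=\{x_1,\dots,x_m\}$ listed in cyclic order, with $m\le 2k+3$ and gaps $d_1,\dots,d_m$. For a pair $x_a,x_b$, the two arcs contain $b-a-1$ and $m-(b-a)-1$ internal members of $X$, and these sum to $m-2\le 2k+1$. Hence at least one arc, the \emph{sparse} one, has at most $k$ internal members. It then suffices to show that the sparse arc is a shortest $(x_a,x_b)$-path. When the two arcs have different numbers of members, the sparse arc consists of fewer gaps, so this follows if the gaps are balanced enough that fewer gaps never sum to strictly more length. When both arcs have at most $k$ internal members, either arc will do.

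I expect this balancing step to be the main obstacle. With gaps in $\{g,g+1\}$, an arc of $s$ gaps has length at most $s(g+1)$, while an arc of $s+1$ gaps has length at least $(s+1)g$. These need not compare favourably when $g$ is small relative to $s$. The remedy I would try first is a Sturmian-type placement, $x_i=\lfloor i n/m\rfloor+j$. With this placement any $s$ consecutive gaps sum to $\lfloor sn/m\rfloor$ or $\lceil sn/m\rceil$, so an arc of $s$ gaps is never longer than an arc of $s+1$ gaps. If exact translates do not tile $V(C_n)$, I would fall back on mimicking the two-block pattern $A\cup B$ from the proof of Proposition~\ref{P9.prop2}, applied to a blown-up cycle, and check the parity of the gap sums directly.
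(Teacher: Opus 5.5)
Your lower bound (via Lemma~\ref{P11.lem2}, Proposition~\ref{P9.prop2} and Theorem~\ref{P9.th3}) and your treatment of the case $2k+3\ge n$ are correct, and your sparse-arc criterion is the right reduction. The gap is the upper bound for $2k+3<n$: you never commit to a partition into $t=\lceil n/(2k+3)\rceil$ parts and verify it.
\begin{itemize}
\item Your formula for $P_j$ is left unfinished.
\item Gaps taking values in $\{g,g+1\}$ are, as you note yourself, not sufficient.
\item Translates of one Sturmian set all have the same size, so they cannot partition $V(C_n)$ into $t$ parts unless $t\mid n$.
\item The ``blown-up two-block'' fallback is not specified.
\end{itemize}
As written, the proposal proves only $\tau_k(C_n)\ge t$.

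The missing observation is that no adjustment is needed. The plain residue classes modulo $t$, which is exactly the paper's partition, already satisfy your criterion, because each class has only one irregular gap. Write $n=qt+r$ with $0\le r<t$.
\begin{itemize}
\item For $j<r$, the class of $j$ has $q+1$ members, with $q$ cyclic gaps equal to $t$ and one wrap-around gap equal to $r$.
\item For $j\ge r$, it has $q$ members, with $q-1$ gaps equal to $t$ and one gap equal to $t+r$.
\end{itemize}
From $n\le (2k+3)t$ you get $q\le 2k+3$, and $q\le 2k+2$ when $r>0$, so every class has at most $2k+3$ members. Now suppose the sparse arc consists of $s'$ gaps and the other arc of $s\ge s'+1$ gaps.
\begin{itemize}
\item For a class of the first kind, the sparse arc has length at most $s't\le (s-1)t+r$, which is at most the length of the other arc.
\item For a class of the second kind, the sparse arc has length at most $s't+r<(s'+1)t\le st$, and the other arc has length at least $st$.
\end{itemize}
Hence the sparse arc is always a geodesic with at most $k$ internal members, and $\tau_k(C_n)\le t$.

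This is also the clean way to justify the paper's own verification. The paper counts class members along a geodesic as if consecutive members were always $t$ apart, and that fails across the short wrap-around gap. For example, take $n=13$, $k=1$, $t=3$. The geodesic $(v_9,v_{10},v_{11},v_{12},v_0)$ has one internal member of the class of $v_0$, not $\lfloor 4/3\rfloor-1=0$. The paper's conclusion still holds, by the argument above.
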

\begin{proof}
If $2k+3\ge n$, then $\mu_k(C_n)=n$ by Proposition~\ref{P9.prop2}, so $V(C_n)$ itself is a  $k$-mutual visible set and hence $\tau_k(C_n)=1=\left\lceil n/\min\{n,2k+3\}\right\rceil$.

Assume now that $2k+3<n$ and set $t=\left\lceil n/(2k+3)\right\rceil$. Write $C_n=(v_0,v_1,\dots,v_{n-1})$ and partition $V(C_n)$ into $t$ parts according to congruence classes modulo $t$, that is, $v_i$ and $v_j$ lie in the same part if and only if $i\equiv j\pmod t$. Let $P$ be one such part and let $u,v\in P$ be distinct. If $Q$ is a shortest $(u,v)$-path in $C_n$, then $|E(Q)|\le\lfloor n/2\rfloor$. Along $Q$, vertices of $P$ appear at indices differing by multiples of $t$, so $Q$ contains at most $\lfloor |E(Q)|/t\rfloor+1$ vertices of $P$. Hence the number of internal vertices of $Q$ that lie in $P$ is at most $\lfloor |E(Q)|/t\rfloor-1\le \lfloor (n/2)/t\rfloor-1$. Since $t=\lceil n/(2k+3)\rceil$, we have $(n/2)/t\le (2k+3)/2<k+2$, and therefore $\lfloor (n/2)/t\rfloor\le k+1$, which implies $\lfloor (n/2)/t\rfloor-1\le k$. Thus $u$ and $v$ are $(P,k)$-visible, so each part is a  $k$-mutual visible set. Consequently, $\tau_k(C_n)\le t$.

On the other hand, by Lemma~\ref{P11.lem2}, $\tau_k(C_n)\ge \lceil n/\mu_k(C_n)\rceil$, and by Proposition~\ref{P9.prop2}, $\mu_k(C_n)=2k+3$ under the present assumption. Hence $\tau_k(C_n)\ge \lceil n/(2k+3)\rceil=t$. Therefore $\tau_k(C_n)=t$, completing the proof.
\end{proof}
\section{Conclusion}
We introduced $k$-mutual visibility as a natural generalization of mutual visibility in graphs by permitting a bounded number of internal vertices of the selected set to lie on shortest paths. This parameter provides a unified framework that interpolates between classical mutual visibility and unrestricted shortest-path connectivity. We established fundamental properties of the $k$-mutual visibility number, including general bounds in terms of graph parameters, and determined its exact value for several important graph classes. We also investigated the behavior of $k$-mutual visibility in convex graphs and block graphs, where a characterization in terms of $k$-admissible sets of the associated block-cut tree was obtained. From an algorithmic perspective, we presented a polynomial-time algorithm, \textsc{kMV}, for recognizing $k$-mutual visibility sets. We further introduced the \textsc{$k$-Mutual Visibility} decision problem, proved its NP-completeness, and proposed the notion of the $k$-mutual visibility covering number, establishing several general bounds and exact values for fundamental graph classes.

The results presented here provide a foundation for the study of parameterized visibility in graphs. Several interesting problems remain open, including determining
$\mu_k(G)$ for other graph classes, obtaining exact values under
additional graph operations, and establishing sharper bounds in terms of
structural graph parameters.
\section*{Declarations}
\subsection*{Funding sources}
This research did not receive any specific grant from funding agencies in the public, commercial, or not-for-profit sectors.
\subsection*{Conflict of interest}
 The authors declare that they have no conflict of interest.

 \subsection*{Data Availability}No data were used for the research described in the article.

\bibliographystyle{plainurl}
\bibliography{cas-refs}

\end{document}